\documentclass[12pt,bezier]{article}
\usepackage{times}
\usepackage{booktabs}
\usepackage{pifont}
\usepackage{floatrow}
\usepackage{caption}
\usepackage[fleqn]{amsmath}
\usepackage{amsfonts,amsthm,amssymb,mathrsfs,bbding}
\usepackage{txfonts}
\usepackage{graphics,multicol}
\usepackage{graphicx}
\usepackage{color}
\usepackage{caption}
\usepackage{cite}
\usepackage{latexsym,bm}
\usepackage{indentfirst}
\usepackage[colorlinks=true,anchorcolor=blue,filecolor=blue,linkcolor=blue,urlcolor=blue,citecolor=blue]{hyperref}
\usepackage{extarrows}
\usepackage{mathtools}
\evensidemargin=\oddsidemargin\topmargin=-1.5cm

\newtheorem{thm}{Theorem}[section]
\newtheorem{prob}{Problem}[section]
\newtheorem{claim}{Claim}

\newtheorem{lem}{Lemma}[section]

\theoremstyle{definition}

\addtocounter{section}{0}

\begin{document}
\title{Extremal problems on $[a, b]$-covered graphs\footnote{Supported by the National Natural Science Foundation of China
{(No. 12371361)} and Distinguished Youth Foundation of Henan Province {(No. 242300421045)}.}}
\author{{\bf Qixuan Yuan}, {\bf Ruifang Liu}\thanks{Corresponding author.
E-mail addresses: rfliu@zzu.edu.cn (R. Liu), yuanqixuan6@gmail.com (Q. Yuan), yuanjj@zzu.edu.cn (J. Yuan).}, {\bf Jinjiang Yuan}\\
{\footnotesize School of Mathematics and Statistics, Zhengzhou University, Zhengzhou, Henan 450001, China}}

\date{}

\maketitle
{\flushleft\large\bf Abstract}
A spanning subgraph $F$ is called an $[a,b]$-factor of $G$ if $a\leq d_F(v)\leq b$ for any vertex $v\in V(G).$
A graph $G$ is $[a,b]$-covered if for each edge $e$ of $G$ there is an $[a,b]$-factor containing it.
From a structural perspective, the existence of an $[a,b]$-factor is a necessary condition for a graph to be $[a,b]$-covered, making the set of $[a,b]$-factor-free graphs a subset of non-$[a,b]$-covered graphs.

For $a=b=1$, an $[a,b]$-covered graph is a matching covered graph.
The structural theory of matching covered graphs constitutes a cornerstone of modern matching theory. Determining whether a given graph is matching covered is a fundamental problem in structural graph theory. Lucchesi et al. [SIAM J. Discrete Math., 2018] showed that a connected graph $G$ is matching covered if and only if every barrier of $G$ is a stable set. In this paper, we completely characterize the extremal graphs that maximize the size or the spectral radius among all non-matching-covered graphs.

For $a \leq b$ and $b \geq 2,$ Hao and Li [Electron. J. Combin., 2024] investigated the extremal problems on $[a,b]$-factor graphs: If $G$ contains no $[a,b]$-factors, then $e(G)\leq \binom{n-1}{2}+a-1$ with equality if and only if $G\cong H_{n,a},$ where $H_{n,a} = K_{a-1} \vee (K_{n-a} \cup K_1).$ Moreover, if $G$ contains no $[a,b]$-factors, then $\rho(G)\leq \rho(H_{n,a})$ with equality if and only if $G \cong H_{n,a}.$ Judging from the structral characterization, non-$[a,b]$-covered graphs exhibit highly complex structures, making the associated extremal problems significantly challenging. To overcome this, we develop a novel minimum-degree forcing technique. Combining this technique and spectral-structural analysis, we in this paper provide complete characterizations of the extremal graphs that maximize the size or the spectral radius within the set of non-$[a,b]$-covered graphs. An intriguing phenomenon revealed by our results is that $H_{n,a}$ remains both the size-extremal graph and the spectral extremal graph for this larger set of non-$[a,b]$-covered graphs. Consequently, our results strengthen the results of Hao-Li.

\begin{flushleft}
\textbf{Keywords:}
Matching covered graphs, $[a, b]$-covered graphs, Size, Spectral radius
\end{flushleft}
\textbf{AMS Classification:} 05C50; 05C35

\section{Introduction}

Let $G$ be a simple and undirected graph with vertex set $V(G)$ and edge set $E(G).$ The {\it order} of $G$ is defined as $v(G)=|V(G)|,$ and its {\it size} is $e(G)=|E(G)|.$ Denote by $N_G(v)$ the set of vertices adjacent to $v$ in $G$ with $v \in V(G),$ and $|N_G(v)|$ is said to be the {\it degree} of $v$ in $G$, denoted by $d_G(v).$ We denote by $\delta(G)$ the {\it minimum degree} of $G.$ For a vertex subset $S \subseteq V(G),$ let $G[S]$ be the subgraph of $G$ induced by $S.$ For $V_1,V_2\subset V(G),$ $E(V_1,V_2)=\{v_1v_2:v_1\in V_1,v_2\in V_2 \mbox{ and } v_1v_2\in E(G)\}$ and $e(V_1,V_2)=|E(V_1,V_2)|.$ Let $G_1$ and $G_2$ be two vertex-disjoint graphs. Denote by $G_1 \cup G_2$ the disjoint union of $G_1$ and $G_2.$ The {\it join} of two disjoint graphs $G_1$ and $G_2$, denoted by $G_1 \vee G_2,$ has vertex set $V(G_1) \cup V(G_2)$ and edge set $E(G_1)\cup E(G_2)\cup \{uv:u\in V(G_1), v\in V(G_2)\}.$ The {\it complement} $\overline{G}$ of $G$ is the graph whose vertex set is $V(G)$ and whose edges are the pairs of nonadjacent vertices of $G.$ Let $A(G)$ be the adjacency matrix of $G.$ The largest eigenvalue of the adjacency matrix $A(G),$ denoted by $\rho(G),$ is called the {\it spectral radius} of $G.$

Given a graph $H$, a graph is said to be {\it $H$-free}, if it does not contain a subgraph isomorphic to $H.$ Extremal graph theory focuses on identifying the global thresholds that guarantee the presence of specific local structures. A classic problem in extremal graph theory is the Tur\'{a}n-type problem, which asks for the maximum number of edges in an $H$-free graph of order $n$ and the characterization of the corresponding extremal graphs. The study of the Tur\'{a}n-type problems dates back to Mantel's theorem \cite{Mantel1907}, which asserts that any triangle-free graph $G$ of order $n$ has at most $\big\lfloor \frac{n^{2}}{4}\big\rfloor$ edges. In 1941, Tur\'{a}n \cite{Turan1941} extended Mantel's theorem and showed that $e(G)\leq e(T_{n,k})$ for every $K_{k+1}$-free graph $G$ of order $n\geq k+1$ and the $k$-partite Tur\'{a}n graph $T_{n,k}$ is the unique extremal graph. This classic problem attract significant research interest and has led to a wealth of results.

Following the development of classical extremal graph theory, spectral extremal graph theory has emerged as a young but highly active field. One of the most fundamental problems in this area is to determine the maximum spectral radius of a given graph family and to characterize the corresponding extremal graphs. In 1986, Wilf \cite{Wilf1986} initially showed that $\rho(G)\leq n(1-\frac{1}{k})$ for every $K_{k+1}$-free graph $G$ of order $n$. This result was later sharpened independently by Nikiforov \cite{Nikiforov2007} and Guiduli \cite{Guiduli1996}. Nikiforov \cite{Nikiforov2007} proved that $\rho(G)\leq \rho(T_{n,k})$ for every $K_{k+1}$-free graph $G$ of order $n,$ with equality if and only if $G\cong T_{n,k}$, which is known as the spectral Tur\'{a}n theorem. In 2010, Nikiforov \cite{Nikiforov2010} formally proposed a spectral version of the Tur\'{a}n-type problem: What is the maximum spectral radius of an $H$-free graph of order $n$? In recent decades, a wide range of results has emerged focusing on this problem.

Further investigations have revealed a consistency between the size extremal graphs and the spectral extremal graphs of various given graph families. Wang, Kang and Xue \cite{Wang2023} confirmed a conjecture by Cioab\u{a}, Desai and Tait \cite{Cioaba2022} in a stronger form. They proved that if the maximum size of $H$-free graphs of order $n$ is $e(T_{n,r})+O(1),$ then the set of graphs that maximize the spectral radius is a subset of the set of graphs that maximize the number of edges. Motivated by these developments, Liu and Ning \cite{Liu2023} proposed an interesting problem of characterizing all graphs $H$ for which every spectral extremal graph is also a size extremal graph for sufficiently large $n$. An extensive body of literature has emerged concerning this problem. For example, see \cite{Turan1941,Nikiforov2007,Guiduli1996} for $K_{k+1},$ \cite{Erdos1995,Cioaba2020,Zhai2022} for $K_1\vee kK_2,$ and \cite{Wang2024} for blow-up of star forest.

Let $g,f$ be two integer-valued functions defined on $V(G).$ A spanning subgraph $F$ is called a {\it $(g,f)$-factor} of $G$ if $g(v)\leq d_F(v)\leq f(v)$ for any vertex $v\in V(G).$ Let $a$ and $b$ be two positive integers with $a\leq b.$ A $(g,f)$-factor is called an {\it $[a,b]$-factor} if $g(v)\equiv a$ and $f(v)\equiv b$ for any $v\in V(G).$ In 1970, Lov\'{a}sz \cite{Lovasz1970} generalized the conclusion of Tutte's $f$-factor theorem in \cite{Tutte1952} to $(g, f)$-factors.

\begin{thm} [\!\!\cite{Lovasz1970}]
A graph $G$ has a $(g, f)$-factor if and only if for any two disjoint subsets $S,$ $T$ of $V(G),$
\begin{eqnarray*}
f(S)-g(T)+\sum_{x\in T}d_{G-S}(x)-\hat{q}_G(S, T) \geq 0,
\end{eqnarray*}
where $\hat{q}_G(S, T)$ denotes the number of components $C$ in $G-S-T$ such that $g(v) = f(v)$ for all $v \in V(C)$ and $f(V(C)) + e_G(V(C), T)\equiv 1 \pmod{2}.$
\end{thm}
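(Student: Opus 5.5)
This is Lovász's $(g,f)$-factor theorem. The plan is to reduce it to Tutte's $f$-factor theorem via a gadget construction; since the paper cites Tutte \cite{Tutte1952}, I will assume Tutte's $f$-factor criterion.

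\textbf{Necessity.} This direction is a parity and counting argument. Fix a $(g,f)$-factor $F$ and disjoint sets $S,T$. First, count the edges of $F$ incident to $T$:
\[
g(T)\le \sum_{x\in T} d_F(x)\le e_F(T,S)+\sum_{x\in T}d_{G-S}(x)-\big(\text{edges of }G-S\text{ at }T\text{ not in }F\big).
\]
Second, take a component $C$ of $G-S-T$ with $g\equiv f$ on $C$ and $f(V(C))+e_G(V(C),T)$ odd. Then
\[
\sum_{v\in C}d_F(v)=f(V(C))=2e_F(C)+e_F(C,S)+e_F(C,T).
\]
So either $e_F(C,S)\ge 1$, or some edge of $E_G(C,T)$ is missing from $F$, by parity. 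Charge each odd component to one such edge. The edges into $S$ are bounded by $f(S)$ together with $e_F(T,S)$. Summing these charges gives the inequality.

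\textbf{Sufficiency.} This is the main work. I construct an auxiliary graph $G^*$ and a function $f^*$ such that $G$ has a $(g,f)$-factor if and only if $G^*$ has an $f^*$-factor.

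The standard gadget works vertex by vertex. For each vertex $v$ with $g(v)<f(v)$, attach $f(v)-g(v)$ ``slack'' edges to a new auxiliary vertex $w_v$. Each $w_v$ is joined to a common new vertex $z$ (with loops, or a parity-fixing pendant structure), so that $z$ absorbs slack and the total degree parity can be adjusted. Then set $f^*(v)=f(v)$ and choose $f^*$ on the new vertices to make the degree targets consistent. A cleaner alternative is to work with Tutte's theorem stated directly for $(g,f)$ via alternating-path augmentation: take a spanning subgraph $F$ minimizing the deficiency $\sum_v \max(0,g(v)-d_F(v),d_F(v)-f(v))$. Then use alternating trails from deficient vertices to define $S$ (saturated vertices reachable by an even trail), $T$ (vertices reachable by an odd trail), and the odd components. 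Finally, show that this triple violates the inequality whenever the deficiency is positive.

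I would follow the second route. The key steps are as follows.
\begin{enumerate}
\item Show that, if the deficiency is positive, no alternating trail from a deficient vertex improves $F$.
\item Classify vertices by their reachability, using even and odd trails.
\item Show that the components left over are exactly those with $g=f$ that have odd parity.
\item Compute that $f(S)-g(T)+\sum_{x\in T}d_{G-S}(x)-\hat q_G(S,T)$ is at most minus the deficiency.
\end{enumerate}

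The main obstacle is step 3. It requires proving that every remaining component $C$ reachable from the deficient vertex has $g(v)=f(v)$ for all $v\in V(C)$ and the stated parity. This requires a blossom-type argument, analogous to the Gallai–Edmonds structure: odd alternating cycles must be contracted, and one must show that any vertex with $g(v)<f(v)$ would yield an augmenting trail. I would first try to handle this by reducing to Tutte's $f$-factor theorem applied componentwise, taking the parity condition from Tutte's statement directly.
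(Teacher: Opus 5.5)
The paper gives no proof of this statement. It quotes Lov\'asz \cite{Lovasz1970} as background and never uses the result afterwards; everything later runs through Liu's criterion, Theorem~\ref{th1.4}. Judged on its own, your necessity argument is correct. Every component $C$ counted by $\hat q_G(S,T)$ has $e_F(V(C),S)+e_{G-F}(V(C),T)$ odd, so it can be charged to an $F$-edge into $S$ or to a non-$F$ edge into $T$. Combining this with $\sum_{v\in S}d_F(v)\le f(S)$ and $g(T)\le\sum_{x\in T}d_F(x)$ gives the inequality.

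The genuine gap is sufficiency, which is the entire content of the theorem and which you do not prove.
\begin{itemize}
\item In the trail route, steps 3 and 4 are the whole argument, and you leave them open.
\item In the gadget route, $f^*$ on the new vertices is unspecified. The real work is also missing: turning a Tutte obstruction $(S^*,T^*)$ of $G^*$ into a Lov\'asz obstruction of $G$.
\item Your fallback, ``apply Tutte's theorem componentwise,'' cannot supply this. Tutte's count includes odd components that contain vertices with $g(v)<f(v)$. Such components can be discarded only because of a global slack effect, not for any componentwise reason.
\end{itemize}

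Here is one way to close the gap.

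\textbf{Construction.} Add a single vertex $z$; the $w_v$'s are unnecessary. Join $z$ to each $v\in V(G)$ by $f(v)-g(v)$ parallel edges and put $\ell$ loops at $z$. Set $f^*=f$ on $V(G)$ and $f^*(z)=N$, where
\begin{itemize}
\item $N\ge\sum_v(f(v)-g(v))$,
\item $N\equiv f(V(G))\pmod 2$,
\item $2\ell\ge N$.
\end{itemize}
Tutte's theorem holds for multigraphs with loops, and $G$ has a $(g,f)$-factor if and only if $G^*$ has an $f^*$-factor.

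\textbf{Reduction.} Suppose $(S^*,T^*)$ has Tutte deficiency $\delta^*<0$, and let $q^*$ be Tutte's odd-component count. Put $S=S^*\setminus\{z\}$ and $T=T^*\setminus\{z\}$, and let $L$ be the left side of Lov\'asz's inequality for $(S,T)$. There are three cases.
\begin{itemize}
\item If $z\in S^*$, then $\delta^*-L=N-\sum_{x\in T}(f(x)-g(x))-(q^*-\hat q_G(S,T))$. At most $\sum_{v\notin S\cup T}(f(v)-g(v))$ components meet $\{v:g(v)<f(v)\}$, so $\delta^*-L\ge N-\sum_v(f(v)-g(v))\ge 0$.
\item If $z\in T^*$, the edges at $z$ convert $f(T)$ into $g(T)$ and pay for those components. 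This gives $\delta^*-L\ge 2\ell-N\ge 0$.
\item If $z$ lies in a component of $G^*-S^*-T^*$, then every component of $G-S-T$ meeting $\{v:g(v)<f(v)\}$ merges with $z$. Hence $\delta^*=L-\epsilon$ with $\epsilon\in\{0,1\}$. Tutte's parity lemma gives $\delta^*\equiv f^*(V(G^*))\equiv 0\pmod 2$. This forces $\delta^*\le -2$, so $L\le -1$.
\end{itemize}
In every case $L<0$, which is the Lov\'asz obstruction you need.
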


Denote by $H_{n,\gamma}$ an $n$-vertex graph consisting of an $(n-1)$-clique together with an additional vertex that is connected to exactly $(\gamma-1)$ vertices of the clique, that is, $H_{n,\gamma} = K_{\gamma-1} \vee (K_{n-\gamma} \cup K_1).$ Let $\mathcal{G}_e$ be the set of $n$-vertex graphs with maximum size that do not contain an $[a, b]$-factor. Hao and Li \cite{Hao2024} determined the maximum size of graphs forbidding $[a, b]$-factors and characterized all graphs in $\mathcal{G}_e$.

\begin{thm} [\!\!\cite{Hao2024}]\label{th1.2}
Let $a\leq b$ be two positive integers, and $G$ be a graph of order $n$, where
$n\geq a+1$ and $na\equiv 0\pmod{2}$ when $a=b.$ If $G$ contains no $[a, b]$-factors, then $e(G) \leq \binom{n-1}{2}+a-1$ with equality if and only if one of the following holds:\\
(i) $G\cong H_{n,a}$ or $K_{1,3},$ if $ab = 1$ or $ab = 2;$\\
(ii) $G\cong H_{n,a}$ or $K_2\vee 3K_1,$ if $a=b=2;$\\
(iii) $G\cong H_{n,a},$ if $b\geq 3.$
\end{thm}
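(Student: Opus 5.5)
Theorem \ref{th1.2} is quoted from Hao and Li, so within this paper it is simply invoked. If I had to reprove it, I would use Lov\'{a}sz's $(g,f)$-factor criterion with $g\equiv a$, $f\equiv b$. The plan is to show that every graph with $e(G)\geq \binom{n-1}{2}+a-1$ other than the listed graphs has an $[a,b]$-factor.

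\textbf{Step 1: extract a violating pair.} Suppose $G$ has no $[a,b]$-factor. Then there are disjoint sets $S,T$ with
\[
b|S|-a|T|+\sum_{x\in T}d_{G-S}(x)-\hat q_G(S,T)<0 .
\]
When $a<b$ the term $\hat q$ vanishes, since no component has $g=f$. When $a=b$, the term $\hat q$ counts odd components, and the parity assumption $na\equiv 0 \pmod 2$ lets me strengthen the inequality to a deficiency of at least $2$. In both cases $T\neq\emptyset$. Each $x\in T$ satisfies $d_{G-S}(x)\le a-1$ on average, so the pair contributes at least roughly $|T|(n-|S|-|T|-(a-1))$ plus some of the edges inside $T$ as non-edges.

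\textbf{Step 2: bound $e(G)$ by counting non-edges.} I would bound $e(G)\le \binom n2 - m(s,t)$, where $s=|S|$, $t=|T|$ and $m(s,t)$ is the guaranteed number of missing edges under the constraint $\sum_{x\in T}d_{G-S}(x)< at-bs$. Writing $e_T=e(G[T])$ and $e(T,R)$ for the edges from $T$ to $R=V\setminus(S\cup T)$, we have $2e_T+e(T,R)< at-bs$. The missing edges number at least
\[
\binom t2-e_T+t(n-s-t)-e(T,R).
\]
Maximising $e(G)$ over this constraint gives a function of $s$ and $t$. For $s=0,t=1$ it reduces to $\binom{n-1}{2}+a-1$, realised by a vertex of degree $a-1$, which is $H_{n,a}$. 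For fixed $t$, each added vertex of $S$ costs $b$ in the slack, so decreasing $s$ helps. For $t\ge2$ the $\binom t2$ term forces more missing edges unless $n$ is small, and the convexity argument in $t$ shows that $t=1$ is optimal.

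\textbf{Step 3: equality and the exceptions.} For equality I would check when the bounds are tight. This forces $T$ to be a single vertex of degree $a-1$ with everything else complete, giving $H_{n,a}$. The small cases $ab\le2$ and $a=b=2$ allow $t=3$ or $s\ge1$ configurations to tie. Examples are $K_{1,3}$ (with $n=4$, $T$ the three leaves) and $K_2\vee 3K_1$ (with $S$ the $K_2$, $T$ the independent set). These are found by directly evaluating the bound for $n\le5$.

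\textbf{Main obstacle.} I expect the main difficulty to be the uniform comparison over all $(s,t)$ for small $n$, together with the parity term when $a=b$. This is where the exceptional graphs appear, and it needs careful case analysis rather than a clean convexity argument.
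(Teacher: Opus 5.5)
The paper gives no proof of this theorem. It is quoted from Hao and Li and used as a black box, and only its case $a=b=1$ (graphs with no perfect matching) is ever invoked, in the proof of Theorem~\ref{main0}(ii). Your first sentence therefore matches the paper's treatment. Your reproof sketch, however, has two genuine gaps.

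The first gap is that the odd-component term is silently dropped when $a=b$. For $a=b$, Lov\'asz's criterion plus the parity argument gives $\sum_{x\in T}d_{G-S}(x)\le a|T|-b|S|+\hat q_G(S,T)-2$. It does not give $\sum_{x\in T}d_{G-S}(x)<a|T|-b|S|$, so the constraint $2e_T+e(T,R)<at-bs$ in your Step~2 is unjustified whenever $\hat q_G(S,T)\ge 2$. Likewise, your claim $T\neq\emptyset$ fails for odd $a=b$. With $T=\emptyset$ the condition reads $\hat q_G(S,\emptyset)\ge a|S|+2$; for $a=1$ this is any Tutte set, e.g.\ the centre of $K_{1,3}$. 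In this regime the forced non-edges lie between distinct components of $G-S-T$, and your count $m(s,t)$ never sees them. You must add $\sum_{i<j}|V(C_i)||V(C_j)|$ to the count and trade it against the extra slack $\hat q_G(S,T)$. For $a=b=1$ this is the classical computation $e(G)\le\binom{s}{2}+s(n-s)+\binom{n-2s-1}{2}$. This bound is convex in $s$ and maximised at $s=0$, giving $K_{n-1}\cup K_1=H_{n,1}$; it ties at $s=\frac n2-1$ only when $n=4$, giving $K_{1,3}$. Since this is exactly the case the paper relies on, it cannot be skipped.

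The second gap is that your reduction to $s=0$ is false. Moving a vertex into $S$ costs $b$ units of slack, but it also removes $t$ pairs from the $T$--$R$ count. So the guaranteed number of non-edges changes by $b-t$, and for $t>b$ a larger $S$ permits \emph{more} edges. The exceptional graphs arise precisely this way: $K_{1,3}$ for $(a,b)=(1,2)$ with $s=1$, $t=3$, and $K_2\vee 3K_1$ for $a=b=2$ with $s=2$, $t=3$. Your Step~3 therefore contradicts your Step~2. What actually bounds $s$ is $b|S|\le a|T|-1$ (respectively $b|S|\le a|T|+\hat q_G(S,T)-2$ when $a=b$), which follows from $\sum_{x\in T}d_{G-S}(x)\ge 0$. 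The resulting bound must be optimised jointly in $s$ and $t$. It is also concave, not convex, in $t$, so besides $t=1$ you must treat the endpoint $t=n-s$. There $R=\emptyset$ and the capacity constraint $e(T,R)\le t(n-s-t)$ takes over, and that is where the small-$n$ cases sit.
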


Moreover, let $\mathcal{G}_{sp}$ be the set of $n$-vertex graphs with maximum spectral radius that do not contain an $[a, b]$-factor. Hao and Li \cite{Hao2024} determined the maximum spectral radius of an $n$-vertex graph forbidding $[a, b]$-factors and characterized all graphs in $\mathcal{G}_{sp}$, which strengthens the result of Wei and Zhang \cite{Wei2023}.

\begin{thm} [\!\!\cite{Hao2024}]\label{th1.3}
Let $a\leq b$ be two positive integers, and $G$ be a graph of order $n$, where $n\geq a+1$ and $na\equiv 0\pmod{2}$ when $a = b$. If $G$ contains no $[a, b]$-factors, then $\rho(G)\leq \rho(H_{n,a})$ with equality if and only if $G \cong H_{n,a}.$
\end{thm}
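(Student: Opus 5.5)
The plan is to apply Theorem 1.1 (Lovász) with $g\equiv a$ and $f\equiv b$, and then run a standard spectral argument. First, if $a=b$ then $\hat q_G(S,T)$ counts odd components; if $a<b$ then $\hat q_G(S,T)=0$.

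Assume $G$ has no $[a,b]$-factor and maximizes $\rho$. Then there are disjoint sets $S,T$ with
\[
b|S|-a|T|+\sum_{x\in T}d_{G-S}(x)-\hat q_G(S,T)<0 .
\]
Adding an edge can only increase $\rho$. I would therefore show that $G$ may be taken edge-maximal: adding an edge inside $S$, or between $S$ and the rest, leaves the left side unchanged. Hence $S$ is a clique joined to everything, so $G=K_{|S|}\vee G'$ with $s=|S|$ and $t=|T|$. Since $T\ne\emptyset$ is forced, adding edges inside $V\setminus(S\cup T)$ also preserves the inequality. The only care needed is with the parity term when $a=b$, since merging components can change it; I would handle this by keeping the component structure and just making each component complete. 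Edges inside $T$, or between $T$ and the other components, raise $\sum d_{G-S}$, so they are the constrained part.

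Next I would reduce to the case $t=1$ and $s=a-1$. The inequality gives roughly $\sum_{x\in T}d_{G-S}(x)<(a-b)s$ plus a term in $t$ and $\hat q$, more precisely $<at-bs+\hat q$. This forces the vertices of $T$ to have small degree into $G-S$. The goal is to bound $\rho(G)$ by $\rho(H_{n,a})$ through comparing with the graph in which one vertex has degree $a-1$ and the rest form a clique.

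\textbf{Main obstacle.} The key step is a comparison lemma. If some vertex $v$ has degree at most $a-1$, then $G$ is a spanning subgraph of $H_{n,a}$ up to isomorphism, and we are done by monotonicity. Otherwise $\delta(G)\ge a$. In that case I would use an edge-count or Hong-type bound of the form $\rho\le\frac{\delta-1}{2}+\sqrt{2e-n\delta+\frac{(\delta+1)^2}{4}}$. Combining it with an upper bound on $e(G)$ from Theorem 1.2, refined using $\delta\ge a$, should show that $\rho(G)<\rho(H_{n,a})$. The estimate needed is $\rho(H_{n,a})>n-2$.

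The hard case is when $\delta\ge a$ but $t\ge2$, or when there are many small odd components. Here I would write the equitable quotient matrix of $K_s\vee(\text{structure on }T\cup\text{components})$ and compare its largest root with the cubic characteristic polynomial of $H_{n,a}$'s quotient. I expect this to work for large $n$, with the small-$n$ cases checked separately. Equality holds only when $G$ is $H_{n,a}$ itself, since any proper spanning subgraph strictly decreases $\rho$ by Perron–Frobenius.
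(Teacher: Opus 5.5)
There is a genuine gap, and it sits exactly at your ``key comparison'' for $\delta(G)\ge a$. The paper does not reprove this cited theorem. Its proof of Theorem~\ref{main1}(i) does recover it for $b\ge 2$ and $n\ge 3a+4$, because a graph with no $[a,b]$-factor is not $[a,b]$-covered. Your opening dichotomy ($\delta\le a-1$ gives a spanning subgraph of $H_{n,a}$) and your use of Lemma~\ref{lem2.1} match that route.

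From $\rho(G)\ge\rho(H_{n,a})>n-2$, the Hong--Nikiforov bound yields only $e(G)\ge\binom{n-1}{2}+\lfloor\delta/2\rfloor+1$, and $\delta$ may equal $a$. Theorem~\ref{th1.2}, even with its equality case (for $a\ge5$ its only extremal graph $H_{n,a}$ has $\delta=a-1$), gives only $e(G)\le\binom{n-1}{2}+a-2$ when $\delta(G)\ge a$. Since $\lfloor a/2\rfloor+1\le a-2$ for every $a\ge 5$, these two bounds are compatible and yield no contradiction.

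So ``refined using $\delta\ge a$'' is not a detail; it is the whole proof. You must show that $\delta(G)\ge a$ together with $e(\overline G)\le n-\lceil a/2\rceil-1$ forces an $[a,b]$-factor. This is the paper's minimum-degree forcing Lemma~\ref{lem3.3}, which even gives $[a,b]$-covered. It is proved by a case analysis of the deficiency $\theta_G(S,T)$:
\begin{itemize}
\item small $|T|$ is excluded directly by $\delta\ge a$ (e.g.\ $\theta_G(S,T)\ge (b-t)s$ for $t\le b-1$ when $a<b$);
\item $|S|$ large relative to $|T|$ is excluded by the term $b|S|-a|T|$;
\item in the remaining range $|S|\le|T|+1$, Lemma~\ref{lem3.1} uses the complement edge bound to force $|T|\le 4$, and a quadratic estimate in $t$ finishes;
\item the case $a=b$ needs additional parity subcases ($t=0$, $t=1$, and the number of components of $G-S-T$).
\end{itemize}

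Your fallback does not fill this hole. After your join reduction, the edges inside $T$ and between $T$ and the components of $G-S-T$ are still free. So there is no fixed equitable partition whose quotient matrix you could compare with that of $H_{n,a}$. Also, ``large $n$, with small $n$ checked separately'' is not a finite verification: the statement allows $n$ down to $a+1$ with $a$ unbounded. The paper's own route needs $n\ge 3a+4$; the full range is Hao--Li's result and is not reproved here.

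Two smaller errors:
\begin{itemize}
\item $T\neq\emptyset$ is not forced when $a=b$. For $a=b=1$, $K_s\vee(s+2)K_1$ violates Lov\'asz's condition with $S=V(K_s)$ and $T=\emptyset$.
\item The reduction ``to $t=1$, $s=a-1$'' is asserted without argument. Ruling out the other pairs $(s,t)$ is precisely what the forcing lemma accomplishes.
\end{itemize}
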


By Theorems \ref{th1.2} and \ref{th1.3}, $\mathcal{G}_{sp}\subseteq \mathcal{G}_e.$ This result provides a positive instance of the problem of Liu and Ning \cite{Liu2023}. In \cite{Little1974}, Little introduced the concept of a {\it matching covered graph}, which is a graph such that for every edge $e$ there exists a $1$-factor containing it. Matching covered is an important and interesting concept in graph theory. Many significant properties of matching covered graphs have been extensively studied, such as ear decomposition \cite{Kothari2015, de2014}, nonfeasible sets \cite{Liu2020} and others \cite{de1996,de2002,He2019}. Lucchesi et al. \cite{Lucchesi2018} presented a tight sufficient condition to guarantee that a graph is matching covered. Szigeti \cite{Szigeti2001} proposed some structural results for extensions of matching-covered graphs. In \cite{Liu1988}, Liu generalized matching covered graphs to $(g,f)$-covered graph.
A graph $G$ is {\it $(g,f)$-covered} if for each edge $e$ of $G$ there is a $(g,f)$-factor containing it.
If $g(v) \equiv a$ and $f(v) \equiv b$ for all $v \in V(G),$ then a $(g,f)$-covered graph is called an {\it $[a,b]$-covered graph}. In \cite{Liu1988}, Liu gave a necessary and sufficient condition for a graph to be $[a,b]$-covered.

For any $S,T \subseteq V(G)$ and $S\cap T = \emptyset,$
let $C$ be a component of $G-S-T.$
If $a = b,$ then we say that $C$ is {\it odd} or {\it even} according to $e_{G}(T, V(C))+b|V(C)|$ being odd or even.
If $a \neq b,$ then we say that $C$ is {\it neutral}.
Let $o_G(S,T)$ denote the number of odd components of $G-S-T.$
Define $\varepsilon(S,T)$ as follows:\\
(i) $\varepsilon(S,T) = 2,$ if one of the following conditions holds: (1) $S$ is not independent.
(2) There is an even component $C$ of $G-S-T$ such that there is an edge between $V(C)$ and $S$ or there is a cut edge $e$ of $C$ such that $e_{G}(T,V(C_i))+b|V(C_i)| \equiv 0 \pmod{2}$ for $i=1,2,$ where $C_1$ and $C_2$ are the components of $C-e.$\\
(ii) $\varepsilon(S,T) = 1,$ if neither of (1) and (2) holds and there is a neutral component of $G-S-T$ such that there is an edge joining $C$ and $S.$\\
(iii) $\varepsilon(S,T) = 0,$ otherwise.

\begin{thm}[\!\cite{Liu1988}]\label{th1.4}
Let $G$ be a graph, and let $a,$ $b$ be integers with $1\leq a \leq b.$ Then $G$ is $[a,b]$-covered if and only if for all $S,T \subseteq V(G),$ $S\cap T = \emptyset,$
\begin{eqnarray*}
\theta_G(S,T) = b|S|-a|T|+\sum_{x\in T} d_{G-S}(x)-o_G(S,T) \geq \varepsilon(S,T).
\end{eqnarray*}
\end{thm}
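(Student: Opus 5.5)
We reduce edge-coverability to the existence of a $(g,f)$-factor in $G-e$ and then apply Lov\'{a}sz's theorem stated above. Fix an edge $e=uv$ of $G$. An $[a,b]$-factor of $G$ containing $e$ corresponds exactly to a $(g_e,f_e)$-factor of $G-e$, where $g_e(w)=a-1$ and $f_e(w)=b-1$ for $w\in\{u,v\}$, and $g_e(w)=a$, $f_e(w)=b$ otherwise. Hence $G$ is $[a,b]$-covered if and only if, for every edge $e$ and all disjoint $S,T$,
\[
\delta_e(S,T):=f_e(S)-g_e(T)+\sum_{x\in T}d_{G-e-S}(x)-\hat q_{G-e}(S,T)\ge 0 .
\]
Note that when $a\neq b$ we still have $g_e\ne f_e$ at $u,v$. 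So $\hat q$ counts components only when $a=b$, and for $a=b$ its parity rule ``$f(V(C))+e(V(C),T)$ odd'' is exactly the paper's notion of an odd component. Consequently $\delta$ computed in $G$ with $(a,b)$ equals $\theta_G(S,T)$. The plan is to compute, for fixed $(S,T)$, the drop $\theta_G(S,T)-\delta_e(S,T)$ according to where $e$ lies, and to show that
\[
\max_{e}\bigl(\theta_G(S,T)-\delta_e(S,T)\bigr)=\varepsilon(S,T).
\]
Once this holds, ``$\delta_e\ge 0$ for all $e$'' is equivalent to ``$\theta_G\ge\varepsilon$'', which proves both directions at once.

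The case analysis runs as follows.
\begin{itemize}
\item If $e\subseteq S$, then $f_e(S)=b|S|-2$, so the drop is $2$. This is condition (1), $S$ not independent.
\item If $e$ joins $S$ and $T$, then $f_e(S)$ and $g_e(T)$ each drop by $1$, and $d_{G-S}$ is unchanged on $T$. The drop is $0$.
\item If $e\subseteq T$, then $g_e(T)$ drops by $2$ and $\sum_{x\in T} d$ drops by $2$. The drop is $0$.
\item If $e$ joins $T$ to a component $C$, then $g_e(T)$ and the degree sum each drop by $1$. Also $f_e(V(C))$ and $e(V(C),T)$ each drop by $1$, so the parity of $C$ is preserved. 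The drop is $0$.
\item If $e$ joins $S$ to a component $C$, then $f_e(S)$ drops by $1$ and the parity of $C$ flips. An even $C$ becomes odd, giving total drop $2$. An odd $C$ becomes even, giving total drop $0$. A neutral $C$ gives drop $1$. These are the $S$--$C$ clause of (2) and clause (ii).
\item If $e$ lies inside a component $C$ and is not a cut edge of $C$, then $f_e(V(C))$ drops by $2$, parity is unchanged, and the drop is $0$.
\item If $e$ is a cut edge of $C$ with $C-e=C_1\cup C_2$, then $\hat q$ now counts $C_1$ and $C_2$ separately. For $i=1,2$, the parity of $e(T,V(C_i))+b|V(C_i)|$ is flipped by the $-1$ in $f_e$. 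If both pieces were ``even'' (so $C$ is even), both become odd, giving drop $2$; this is the cut-edge clause of (2). In every other case (relevant only when $a=b$) the count of odd pieces is at most the previous contribution of $C$, so the drop is at most $0$. If $a\ne b$, nothing is counted and the drop is $0$.
\end{itemize}

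Taking the maximum over $e$ reproduces the three-level definition of $\varepsilon(S,T)$. The value $2$ arises when (1) or (2) occurs. The value $1$ arises when neither occurs but a neutral component sends an edge to $S$. Otherwise the maximum is $0$; here, if $G$ has edges, some $e$ of the types with drop $0$ exists.

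The main obstacle is the bookkeeping of $\hat q$ in the cut-edge case when $a=b$, in particular verifying that an odd $C$ split by a cut edge never increases the number of odd components by more than the slack allows. This reduces to a parity check: $C_1$ and $C_2$ have parities summing to the parity of $C$ shifted by the removed edge. I would also check degenerate situations (graphs without edges, $a=1$ giving $g_e=0$) separately, but these are routine.
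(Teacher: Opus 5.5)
Your proposal is correct. It necessarily takes a different route from the paper, because the paper never proves this statement: it is quoted from Liu (1988) and used only as a black box in Lemma~3.3 and in the proof of Theorem~1.5.

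What you do is derive it from Lov\'asz's $(g,f)$-factor theorem, which the paper already states in the introduction. The first step is the standard edge-deletion reduction: an $[a,b]$-factor through $e=uv$ is the same thing as a $(g_e,f_e)$-factor of $G-e$. The second step computes, for fixed $(S,T)$, the shift $D_e=\theta_G(S,T)-\delta_e(S,T)$ by the position of $e$. I checked your case table.
\begin{itemize}
\item $D_e=2$ for edges inside $S$, for edges from $S$ to an even component, and for cut edges of $C$ whose two sides both satisfy $e_G(T,V(C_i))+b|V(C_i)|\equiv 0 \pmod 2$.
\item $D_e=1$ for edges from $S$ to a neutral component.
\item $D_e=0$ in every other position.
\end{itemize}
Your parity table for cut edges already disposes of what you call the main obstacle. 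In the two remaining parity patterns, the number of odd pieces equals the old contribution of $C$ exactly, so no slack argument is needed. Swapping the quantifiers over $e$ and $(S,T)$ then gives the equivalence.

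Compared with a bare citation, your route makes the theorem checkable from a result the paper already states. It also shows where each clause of $\varepsilon(S,T)$ comes from, since each clause corresponds to exactly one edge position. The same computation yields Liu's $(g,f)$-covered version verbatim.

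One correction: the edgeless case is not a routine side check. If $E(G)=\emptyset$ and $n\ge 1$, then $G$ is vacuously $[a,b]$-covered. Yet $S=\emptyset$, $T=V(G)$ gives $\theta_G(S,T)=-an<0=\varepsilon(S,T)$. So the equivalence as literally stated requires $E(G)\neq\emptyset$, or a convention that a covered graph has an $[a,b]$-factor. This is exactly the hypothesis your argument needs: it makes $\max_e D_e$ well defined, and in the ``otherwise'' case it makes that maximum equal $0$.
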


Let $\mathcal{G}_e^*$ be the set of maximum size $n$-vertex graphs that are not $[a, b]$-covered, and let $\mathcal{G}_{sp}^*$ be the set of maximum spectral radius $n$-vertex graphs that are not $[a, b]$-covered. Note that if a graph is $[a, b]$-covered, then it must contain an $[a, b]$-factor, but the converse is not necessarily true. Motivated by Theorems \ref{th1.2} and \ref{th1.3}, a natural and interesting problem arises:
\begin{prob}\label{pro1}
Does the inclusion $\mathcal{G}_{sp}^* \subseteq \mathcal{G}_e^*$ hold?
\end{prob}

Utilizing Theorem \ref{th1.4}, we completely characterize the extremal graphs that maximize the size within the set of non-$[a,b]$-covered graphs.

\begin{thm}\label{main0}
Let $a$ and $b$ be two positive integers, and let $G$ be a graph of order $n$ and $na\equiv 0\pmod{2}$ when $a=b.$\\
(i) For $a \leq b,$ $b \geq 2$ and $n \geq 3a+4,$ if $G$ is not $[a,b]$-covered, then $e(G) \leq \binom{n-1}{2}+a-1$ with equality if and only if $G\cong H_{n,a}.$\\
(ii) For $a=b=1$ and $n \geq 4,$ if $G$ is not matching-covered, then $e(G) \leq \binom{n-1}{2}+2$ with equality if and only if $G \cong H_{n,3}$ or $K_3\vee 3K_1.$
\end{thm}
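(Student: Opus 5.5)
The plan is to work with Theorem~\ref{th1.4}. If $G$ is not $[a,b]$-covered, there are disjoint $S,T\subseteq V(G)$ with $\theta_G(S,T)\le \varepsilon(S,T)-1\le 1$. Write $s=|S|$, $t=|T|$. The proof then has three parts:
\begin{itemize}
\item[(1)] realize the extremal graphs;
\item[(2)] bound $e(G)$ in terms of $s,t$ using this violated inequality;
\item[(3)] show the bound is below $\binom{n-1}{2}+a-1$ unless $G$ is an extremal graph.
\end{itemize}

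For (1), $H_{n,a}$ has no $[a,b]$-factor. Its vertex of degree $a-1$ cannot reach degree $a$, so $H_{n,a}$ is not $[a,b]$-covered. For $a=b=1$ we need graphs that have a perfect matching but have an edge lying in no perfect matching.
\begin{itemize}
\item In $H_{n,3}$ ($n$ even), the degree-$2$ vertex $v$ is adjacent to $u_1,u_2$. The edge $u_1u_2$ lies in no perfect matching, since $v$ would then be unmatched.
\item In $K_3\vee 3K_1$, take $S=V(K_3)$. Then $S$ is not independent and $o(G-S)=3=|S|$, so $\theta=0<2=\varepsilon$ and the graph is not covered.
\end{itemize}
Both graphs have $\binom{n-1}{2}+2$ edges. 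Also, $K_1\vee(K_{n-2}\cup K_1)$ and the like have no perfect matching, so they have fewer edges.

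For (2) I would use a minimum-degree dichotomy, the ``forcing'' step.
\begin{itemize}
\item If $\delta(G)\le a-1$, then $e(G)\le \binom{n-1}{2}+\delta(G)\le\binom{n-1}{2}+a-1$. Equality forces the minimum-degree vertex to have degree exactly $a-1$ and $G-v\cong K_{n-1}$, i.e.\ $G\cong H_{n,a}$.
\item So assume $\delta(G)\ge a$. Then $T\neq\emptyset$ or $S$ is nontrivial. From the violated inequality,
\[
\sum_{x\in T}d_{G-S}(x)\le a t-b s+o_G(S,T)+1 .
\]
Here $o_G(S,T)\le n-s-t$, and $o_G(S,T)=0$ when $a<b$.
\item Count the non-edges. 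Each $x\in T$ misses at least $n-1-s-d_{G-S}(x)$ vertices, so the non-edges incident to $T$ number at least roughly
\[
t(n-1-s)-\binom{t}{2}-\bigl(at-bs+o+1\bigr).
\]
When $a=b$ there are also missing edges between distinct odd components.
\item With $\delta\ge a$, a vertex $x\in T$ with $d_{G-S}(x)$ small needs $s$ large. Combined with $b s\le at+o+1$, this forces $t$ to be comparable to $s$. Each component counted in $o$ has its vertices of degree $\ge a$, hence size or attachment at least $a+1-s$ or so.
\end{itemize}
Optimizing over $(s,t,o)$, I expect the number of non-edges to be at least about $t(n-1-s-a)+\dots$. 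For $n\ge 3a+4$ this exceeds $n-a$, the number of non-edges in $H_{n,a}$. The exception is $t=0$, where the inequality reduces to $o\ge bs-1$; for $b\ge2$ the components then cost many non-edges once $s\ge1$. The case $s=t=0$ requires $o\ge 1$ when $a=b$, which is impossible by parity since $na$ is even; otherwise it requires $\varepsilon=2$, i.e.\ a cut edge with even sides. That last situation gives a disconnected-type structure with many missing edges.

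For (ii) the same scheme runs with $a=b=1$. The tight case $\theta=0$ with $\varepsilon=2$ arises from a non-independent $S$ with $o=|S|$, or from an edge from an even component to $S$. This yields exactly $s=3$, $t=0$, $o=3$ singletons ($K_3\vee3K_1$), or $s=2$ with odd components $\{v\}$ and $K_{n-3}$ ($H_{n,3}$).

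The main obstacle is the case analysis in (2)/(3) for $\delta\ge a$. The term $\sum_{x\in T} d_{G-S}(x)$ interacts with $s$, and the bound $e(G)\le\binom{n}{2}-(\text{non-edges})$ must be shown convex in $t$, so that only the endpoint values $t\in\{0,1\}$ or $t$ maximal need checking. I would try first to prove the key lemma that some $x\in T$ has $d_G(x)\le s+a-1+o/t$, and then compare degrees directly against $H_{n,a}$.
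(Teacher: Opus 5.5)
Your skeleton for (i) is the paper's. $H_{n,a}$ is not covered, and the case $\delta(G)\le a-1$ is closed by $e(G)\le\binom{n-1}{2}+\delta(G)$. What remains is to show that $\delta(G)\ge a$ together with $e(\overline{G})\le n-a$ forces $G$ to be $[a,b]$-covered. That step is the substance of the proof (the paper's Lemmas~\ref{lem3.1}--\ref{lem3.3}), and you have not supplied it.

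Your leading term $t(n-1-s-a)$ is already below $n-a$ for $t=1$ and for large $s$, so everything rests on the unspecified ``$+\cdots$''. Your proposed key lemma cannot close the gap either. When $\delta(G)\ge a$, every single vertex misses at most $n-1-a<n-a$ others, so no degree bound on one vertex of $T$ can beat $H_{n,a}$. What is missing is a way to pin down $s$ and $t$ before counting.

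For $a<b$ the paper uses $|S|\ge\varepsilon(S,T)$ (Lemma~\ref{lem3.2}):
\begin{itemize}
\item $d_{G-S}(x)\ge a-s$ gives $\theta_G(S,T)\ge (b-t)s\ge\varepsilon(S,T)$ unless $t\ge b$;
\item $s\ge t$ gives $\theta_G(S,T)\ge bs-at\ge s\ge\varepsilon(S,T)$, so $s\le t-1$;
\item the counting in Lemma~\ref{lem3.1} then yields $t\le 4$, and a concave quadratic lower bound on $\theta_G$ is checked at $t=2$ and $t=4$.
\end{itemize}
For $a=b$ the paper splits into $t=0$, $t=1$, $t\ge 2$. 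It uses the parity of $na$ (plus connectivity) to exclude the configurations where $G-S-T$ is a single component when $t\le 1$, together with a cut-edge analysis. It uses Lemma~\ref{lem2.3} to handle the $q\ge as-1$ components when $t=0$. For $t\ge 2$ it proves $q\le 2$ and $s\ge t+2$, which give $\theta_G(S,T)\ge a(s-t)-q\ge 2$. None of these reductions appears in your sketch, and without them your optimization over $(s,t,o)$ is not under control.

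For (ii), ``the same scheme'' is not actually set up. With $a=1$ the dichotomy $\delta\le a-1$ is empty; the relevant split is $\delta\le 2$ versus $\delta\ge 3$. For $\delta\ge 3$ you would have to prove that every violating pair $(S,T)$ forces at least $n-3$ non-edges, with equality only for $K_3\vee 3K_1$. Listing the two tight configurations does not do that.

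The paper avoids Theorem~\ref{th1.4} here altogether. If $G$ is not matching covered, some edge $uv$ lies in no perfect matching, so $G-\{u,v\}$, of even order $n-2$, has no perfect matching. Theorem~\ref{th1.2} with $a=b=1$ gives $e(G-\{u,v\})\le\binom{n-3}{2}$, with equality only for $K_{n-3}\cup K_1$ or $K_{1,3}$. Adding back at most $2(n-2)+1$ edges gives $e(G)\le\binom{n-1}{2}+2$, with equality exactly for $H_{n,3}$ and $K_3\vee 3K_1$.

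A small slip in your part (1): $K_1\vee(K_{n-2}\cup K_1)=H_{n,2}$ does have a perfect matching for even $n$. It is irrelevant only because it has $\binom{n-1}{2}+1$ edges.
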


In this paper, we also completely characterize the extremal graphs that maximize the spectral radius  among all non-$[a,b]$-covered graphs, which strengthens Theorem \ref{th1.3}.

\begin{thm}\label{main1}
Let $a$ and $b$ be two positive integers, and let $G$ be a graph of order $n$ and $na\equiv 0\pmod{2}$ when $a=b.$\\
(i) For $a \leq b,$ $b \geq 2$ and $n \geq 3a+4,$ if $G$ is not $[a,b]$-covered, then $\rho(G) \leq \rho(H_{n,a})$ with equality if and only if $G\cong H_{n,a}.$\\
(ii) For $a=b=1$ and $n \geq 4,$ if $G$ is not matching-covered, then $\rho(G) \leq \rho(H_{n,3})$ with equality if and only if $G\cong H_{n,3}.$
\end{thm}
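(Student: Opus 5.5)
Throughout, $G$ is a non-$[a,b]$-covered graph of order $n$ with maximum spectral radius. We may assume $G$ is connected: otherwise we can add an edge between two components, and the resulting graph is still not covered because that edge is a cut edge, which we will check via Theorem \ref{th1.4}. The spectral radius then increases strictly, contradicting maximality.

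\textbf{Case (i): $a\le b$, $b\ge 2$.} First note that $H_{n,a}$ is not $[a,b]$-covered: its vertex $v$ of degree $a-1$ cannot lie in any $[a,b]$-factor. Hence $\rho(G)\ge\rho(H_{n,a})>n-2$. By Theorem \ref{th1.4} there are disjoint sets $S,T$ with $\theta_G(S,T)<\varepsilon(S,T)\le 2$. Our plan is the minimum-degree forcing step:
\begin{itemize}
\item If $\delta(G)\le a-1$, then $G$ is a spanning subgraph of $H_{n,\delta+1}$. Since $\rho(H_{n,k})$ is increasing in $k$, we get $\rho(G)\le\rho(H_{n,a})$, with equality only if $G\cong H_{n,a}$.
\item If $\delta(G)\ge a$, we must show $\rho(G)<\rho(H_{n,a})$.
\end{itemize}

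For the second subcase we work from the violating pair $(S,T)$, writing $s=|S|$ and $t=|T|$. The first step is to bound $t$. Using $d_{G-S}(x)\ge \delta-s\ge a-s$ and $o_G(S,T)\le n-s-t$ in the inequality $bs-at+\sum_{x\in T}d_{G-S}(x)-o_G(S,T)\le 1$ gives a restricted range for $(s,t)$. In the main regime $T$ is small, and then $G-S-T$ has many components or few edges leave $T$.

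Next we use the saturation principle: adding edges inside $S$, inside components, or between $S$ and everything else does not increase $\theta$. One has to check carefully that this does not decrease $\varepsilon$; the key observation is that once $S$ is not independent we already have $\varepsilon=2$. So we may embed $G$ into an extremal template of the form $K_s\vee(\text{structure on }T\text{ and components})$. We then bound the spectral radius of each template by an equitable quotient matrix, or more simply through $\rho\le\sqrt{2e-n+1}$ (Hong's bound) combined with an edge count. The goal is to show that every template with $\delta\ge a$ has fewer than $\binom{n-1}{2}+a-1-\Omega(n)$ edges, or spectral radius below $n-2+\epsilon$, whereas $\rho(H_{n,a})>n-2$. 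Templates that are nearly complete force $T$ to contain a vertex of low degree, which brings us back to the first subcase. The hypothesis $n\ge 3a+4$ is exactly what makes the comparison $\rho(\text{template})<n-2$ hold. I expect this comparison to be the main obstacle: bounding $\rho$ over all $(s,t)$ in the borderline cases $s+t\approx a$ with $\varepsilon=2$ coming from a cut edge or an $S$–component edge. Where Hong's bound is not sharp enough, I would fall back on Perron-vector entry comparisons.

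\textbf{Case (ii): $a=b=1$.} A connected non-matching-covered graph either has no perfect matching or has an edge lying in no perfect matching.
\begin{itemize}
\item If $G$ has no perfect matching, Theorem \ref{th1.3} gives $\rho(G)\le\rho(H_{n,1})<\rho(H_{n,3})$.
\item Otherwise, by the barrier characterization there is a barrier $S$ that is not stable. Then $G$ is a subgraph of $K_s\vee (s\,K_1)$-type graphs, namely $K_s\vee(\text{odd components})$ with $o(G-S)=|S|$.
\end{itemize}
Maximizing over these gives candidates $K_s\vee (K_{n-2s+1}\cup (s-1)K_1)$. For $s=2$ this is $H_{n,3}$, and for $s=3$ with $n=6$ it is $K_3\vee 3K_1$. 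A quotient-matrix computation then shows that the spectral radius is uniquely maximized at $s=2$, i.e.\ by $H_{n,3}$. In particular $\rho(K_3\vee3K_1)<\rho(H_{6,3})$, which is a direct check.
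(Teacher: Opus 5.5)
\textbf{The central case of (i) is not proved.} Your treatment of $\delta(G)\le a-1$ matches the paper. The case $\delta(G)\ge a$ in (i), however, is the whole substance of the theorem, and there you give only a plan and leave the decisive comparison open.

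The plan also breaks exactly where you fear. You cannot saturate the violating pair $(S,T)$ in general:
\begin{itemize}
\item For $a=b$ and $S=\emptyset$, $\varepsilon(S,T)=2$ can only come from a cut edge of an even component. Adding edges inside that component destroys the cut edge and drops $\varepsilon$ to $0$.
\item Edges inside $T$, or between $T$ and $G-S-T$, cannot be added without raising $\theta$. So your ``template'' still contains an essentially arbitrary graph on $T\cup V(G-S-T)$, whose spectral radius you would have to control over all $(s,t)$.
\end{itemize}

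\textbf{The missing idea.} Trade the spectral hypothesis for an edge hypothesis that keeps track of $\delta$. By Lemma~\ref{lem2.1}, $\rho(G)\ge\rho(H_{n,a})\ge n-2$ together with $\delta(G)\ge a$ gives $e(G)\ge\binom{n-1}{2}+\lceil a/2\rceil$, i.e.\ $e(\overline G)\le n-\lceil a/2\rceil-1$. Hong's bound $\sqrt{2e-n+1}$, which you cite, only yields $e(\overline G)\le n-2$ and loses the dependence on $a$.

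After that step no spectral estimate of any template is needed. One shows directly from Theorem~\ref{th1.4} that every graph with $\delta\ge a$, $n\ge 3a+4$ and at most $n-\lceil a/2\rceil-1$ non-edges is $[a,b]$-covered (Lemma~\ref{lem3.3}):
\begin{itemize}
\item For $a<b$: the inequality $|S|\ge\varepsilon(S,T)$ forces $t\ge b$ and $s\le t-1$. Edge counting then gives $t\le 4$, and finally $\theta>2$.
\item For $a=b$: split into $t=0$, $t=1$ and $t\ge 2$. In each case a violating pair forces a vertex of degree less than $a$, or more than $n-\lceil a/2\rceil-1$ non-edges, or an explicit bound $\theta\ge\varepsilon$ (parity from $na$ even handles $t=1$).
\end{itemize}
This is where $n\ge 3a+4$ is actually used.

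\textbf{The connectivity reduction is false.} For $a=1$, $b\ge 2$ the extremal graph $H_{n,1}=K_{n-1}\cup K_1$ is itself disconnected. Joining its two components by an edge gives $H_{n,2}$, which is $[1,b]$-covered. So adding a cut edge can destroy non-coveredness. Also, $\rho(H_{n,1})=n-2$, not $>n-2$.
\begin{itemize}
\item In (i) you never use connectivity, so simply drop the reduction.
\item In (ii) you need connectivity to apply the barrier theorem of Lucchesi et al. A disconnected graph with a perfect matching has all components of order at most $n-2$, hence $\rho\le n-3<\rho(H_{n,3})$, so that case is trivial.
\end{itemize}

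\textbf{Your route for (ii).} With that repair, your argument is a valid alternative to the paper's. If there is no perfect matching, use Theorem~\ref{th1.3}. Otherwise take a non-stable barrier, so $G\subseteq K_s\vee(K_{l_1}\cup\cdots\cup K_{l_s})$ with all $l_i$ odd and $s\ge 2$. You still owe two things:
\begin{itemize}
\item a parity-preserving spectral analogue of Lemma~\ref{lem2.3};
\item the comparison of $\rho(K_s\vee(K_{n-2s+1}\cup(s-1)K_1))$ over all $2\le s\le n/2$, with strict inequality for $s\ge 3$.
\end{itemize}
The paper avoids both. If $\delta(G)\le 2$, then $G$ lies inside $H_{n,3}$. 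If $\delta(G)\ge 3$, Lemma~\ref{lem2.2} gives $e(G)\ge\binom{n-1}{2}+2$. Theorem~\ref{main0}(ii) then leaves only $K_3\vee 3K_1$ with $n=6$, which is settled by a quotient-matrix computation.
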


Based on Theorems \ref{main0} and \ref{main1}, we completely confirm Problem \ref{pro1}.

\section{Preliminaries}

In this section, we introduce some necessary results, which are essential to the proofs of our main theorem.

\begin{lem}[\!\cite{Hong2001,Nikiforov2002}]\label{lem2.1}
Let $G$ be a graph with minimum degree $\delta$. Then
\begin{eqnarray*}
\rho(G)\leq \frac{\delta-1}{2}+\sqrt{2e(G)-\delta n+\frac{(\delta+1)^2}{4}}.
\end{eqnarray*}
\end{lem}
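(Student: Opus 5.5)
We prove the bound by a row-sum estimate for the matrix $B=A^2-(\delta-1)A$, where $A=A(G)$, evaluated against the Perron vector of $A$. Write $m=e(G)$ and $\rho=\rho(G)$. Let $x$ be a nonnegative Perron eigenvector of $A$ with $Ax=\rho x$ and $x\neq 0$. Then $Bx=(\rho^2-(\delta-1)\rho)x$, so $\lambda:=\rho^2-(\delta-1)\rho$ is an eigenvalue of $B$ with a nonnegative eigenvector. The goal is to show
\begin{eqnarray*}
\rho^2-(\delta-1)\rho\leq 2m-\delta n+\delta,
\end{eqnarray*}
since completing the square in $\rho$ gives exactly the stated bound:
\begin{eqnarray*}
\Bigl(\rho-\frac{\delta-1}{2}\Bigr)^2\leq 2m-\delta n+\delta+\frac{(\delta-1)^2}{4}=2m-\delta n+\frac{(\delta+1)^2}{4}.
\end{eqnarray*}

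\emph{Step 1: row sums of $B$.} For a vertex $u$, the row sum of $A^2$ at $u$ is $\sum_{v\in N(u)}d(v)$. The vertices outside $N(u)$ are $u$ itself, of degree $d(u)$, and $n-1-d(u)$ further vertices, each of degree at least $\delta$. Hence
\begin{eqnarray*}
\sum_{v\in N(u)}d(v)=2m-\sum_{v\notin N(u)}d(v)\leq 2m-d(u)-(n-1-d(u))\delta .
\end{eqnarray*}
Subtracting $(\delta-1)d(u)$, the row sum of $B$ at $u$, call it $r_u$, satisfies
\begin{eqnarray*}
r_u\leq 2m-d(u)-(n-1-d(u))\delta-(\delta-1)d(u)=2m-(n-1)\delta .
\end{eqnarray*}
The terms in $d(u)$ cancel exactly, which is why the shift by $(\delta-1)A$ is chosen.

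\emph{Step 2: passing from row sums to $\lambda$.} This is the step that needs care. $B$ may have negative entries when $\delta\geq 2$, so the usual "largest eigenvector entry" argument fails: bounding a negative coefficient times $x_w$ from above by using $x_w\leq x_u$ goes the wrong way. Instead we use the symmetry of $B$. Multiply $Bx=\lambda x$ on the left by the all-ones vector $\mathbf{1}^T$. Since $\mathbf{1}^TB=(r_w)_w$, this gives
\begin{eqnarray*}
\lambda\sum_w x_w=\sum_w r_w x_w\leq \Bigl(\max_w r_w\Bigr)\sum_w x_w .
\end{eqnarray*}
Because $x\geq 0$ and $x\neq 0$, we have $\sum_w x_w>0$, so $\lambda\leq \max_w r_w\leq 2m-(n-1)\delta$. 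No sign condition on the entries of $B$ is needed here.

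\emph{Step 3: conclusion.} Steps 1 and 2 give $\rho^2-(\delta-1)\rho\leq 2m-\delta n+\delta$, and completing the square as displayed above yields the lemma, taking the nonnegative square root since $\rho\geq 0$.
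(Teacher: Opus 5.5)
Your proof is correct. The paper gives no proof of this lemma; it is quoted from Hong--Shu--Fang and Nikiforov. What you give is the standard argument for this bound: bound the row sums of $A^2-(\delta-1)A$ by $2e(G)-(n-1)\delta$, then pair the eigen-equation with $\mathbf{1}$ (using symmetry), which needs only a nonnegative eigenvector. So no connectivity assumption is required, and that is the generality in which the paper uses the lemma in Lemma~\ref{lem2.2}.

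One small correction: the final square-root step follows from $\rho-\frac{\delta-1}{2}\le\bigl|\rho-\frac{\delta-1}{2}\bigr|$, not from $\rho\ge 0$.
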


\begin{lem}\label{lem2.2}
Let $G$ be a graph with minimum degree $\delta$. If $\delta\geq a$ and $\rho(G) \geq n-2,$ then $e(\overline{G})\leq n-\lceil\frac{a}{2}\rceil-1.$
\end{lem}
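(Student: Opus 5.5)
Write $\delta=\delta(G)$. The plan is to combine Lemma \ref{lem2.1} with the hypothesis $\rho(G)\ge n-2$ to get an upper bound on $e(\overline{G})$ that depends on $\delta$. I then use $\delta\ge a$ to make it as small as possible.

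\textbf{Step 1: isolate $e(G)$.} Since $\rho(G)\ge n-2$, Lemma \ref{lem2.1} gives
$$n-2-\frac{\delta-1}{2}\le \sqrt{2e(G)-\delta n+\frac{(\delta+1)^2}{4}}.$$
The left side equals $\frac{2n-3-\delta}{2}$, which is nonnegative because $\delta\le n-1$. So I may square both sides:
$$\frac{(2n-3-\delta)^2}{4}\le 2e(G)-\delta n+\frac{(\delta+1)^2}{4}.$$

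\textbf{Step 2: expand.} We have $(2n-3-\delta)^2-(\delta+1)^2=(2n-4-2\delta)(2n-2)=4(n-2-\delta)(n-1)$. Substituting,
$$2e(G)\ge \delta n+(n-1)(n-2-\delta)=(n-1)(n-2)+\delta .$$
Since $e(\overline{G})=\binom{n}{2}-e(G)$, this becomes
$$e(\overline{G})\le \binom{n}{2}-\binom{n-1}{2}-\frac{\delta}{2}=n-1-\frac{\delta}{2}.$$

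\textbf{Step 3: use $\delta\ge a$ and integrality.} From $\delta\ge a$ we get $e(\overline{G})\le n-1-\frac{a}{2}$. Because $e(\overline{G})$ is an integer, $e(\overline{G})\le\lfloor n-1-\frac a2\rfloor = n-1-\lceil \frac a2\rceil$, which is the claimed bound.

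The only point needing care is the squaring in Step 1, which requires the left side to be nonnegative. This holds since $\delta\le n-1<2n-3$ for $n\ge 3$. For $n\le 2$ the statement is trivial to check directly, since then $\delta\ge a\ge 1$ forces $G=K_2$ and $e(\overline{G})=0$. Everything else is routine algebra.
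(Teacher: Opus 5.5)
Your proof is correct and follows the paper's argument. You substitute $\rho(G)\ge n-2$ into the bound of Lemma~\ref{lem2.1}, solve to get $e(G)\ge\binom{n-1}{2}+\frac{\delta}{2}$, and pass to the complement using $\delta\ge a$ and integrality. You also spell out the algebra and the nonnegativity needed for squaring, which the paper leaves implicit; the paper rounds to $\lceil\delta/2\rceil$ before substituting $a$ while you round at the end, an inessential difference.
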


\begin{proof}
By Lemma \ref{lem2.1}, we have
\begin{eqnarray*}
n-2 \leq \rho(G) \leq \frac{\delta-1}{2}+\sqrt{2e(G)-\delta n+\frac{(\delta+1)^2}{4}}.
\end{eqnarray*}
Hence $e(G)\geq\binom{n-1}{2}+\lceil\frac{\delta}{2}\rceil.$
Since $\delta \geq a$, we have $e(G)\geq\binom{n-1}{2}+\lceil\frac{a}{2}\rceil.$ Therefore, $e(\overline{G})\leq n-\lceil\frac{a}{2}\rceil-1.$
\end{proof}

\begin{lem}\label{lem2.3}
Let $l_1\geq l_2 \geq \cdots \geq l_q$ be positive integers, where $q\geq 1.$
Then $e(K_s\vee (K_{l_1}\cup K_{l_2}\cup \cdots \cup K_{l_q})) \leq e(K_s \vee (K_{n-s-q+1}\cup (q-1)K_1)).$
\end{lem}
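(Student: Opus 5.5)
Here $n$ is implicitly $s+\sum_{i=1}^q l_i$. The plan is to prove Lemma \ref{lem2.3} by a direct counting argument. The edge count of $K_s\vee(K_{l_1}\cup\cdots\cup K_{l_q})$ splits as
\begin{eqnarray*}
e = \binom{s}{2}+s(n-s)+\sum_{i=1}^q\binom{l_i}{2}.
\end{eqnarray*}
The first two terms depend only on $s$ and $n$, and they are the same for the graph on the right-hand side. So it suffices to show that
\begin{eqnarray*}
\sum_{i=1}^q\binom{l_i}{2}\leq \binom{n-s-q+1}{2}
\end{eqnarray*}
whenever $l_1,\dots,l_q$ are positive integers with $\sum_i l_i=n-s$. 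Note that $\binom{1}{2}=0$, so the right-hand side is exactly the clique contribution of $K_{n-s-q+1}\cup(q-1)K_1$.

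The key step is a one-at-a-time transfer argument. Suppose some $l_j\geq 2$ with $j\geq 2$. Replace $(l_1,l_j)$ by $(l_1+1,l_j-1)$. The change in $\sum_i\binom{l_i}{2}$ is
\begin{eqnarray*}
\binom{l_1+1}{2}-\binom{l_1}{2}+\binom{l_j-1}{2}-\binom{l_j}{2}=l_1-(l_j-1)\geq 1,
\end{eqnarray*}
which holds because $l_1\geq l_j$. Every part stays positive and the sum is unchanged, so the quantity strictly increases.

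Repeat this until $l_2=\cdots=l_q=1$, at which point $l_1=n-s-q+1$. This gives the inequality, with equality if and only if the original partition was already $(n-s-q+1,1,\dots,1)$. If one prefers to avoid iteration, the same conclusion follows from convexity of $x\mapsto\binom{x}{2}$: a convex function summed under a fixed total with lower bounds $l_i\geq1$ is maximized at an extreme point.

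The main obstacle is bookkeeping rather than an idea. One must be careful that $n$ denotes $s+\sum l_i$, so that both graphs have the same order, and that the ordering $l_1\geq l_j$ is maintained after each transfer. This holds automatically, since the transfer only increases $l_1$ and decreases $l_j$. If $q=1$ the statement is trivial, since the two graphs coincide.
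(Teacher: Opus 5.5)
Your proof is correct and uses essentially the same idea as the paper: consolidate the smaller cliques into the largest one and check that the edge count does not drop. The paper does this by graph surgery, moving all $l_i-1$ non-chosen vertices of $K_{l_i}$ into $K_{l_1}$ at once for a net gain of $l_1(l_i-1)-(l_i-1)\ge 0$ edges. You instead reduce to $\sum_i\binom{l_i}{2}\le\binom{n-s-q+1}{2}$ and move one vertex at a time, noting explicitly that $l_1\ge l_j$ persists after each move; this also yields the equality case, which the paper does not state.
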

\begin{proof}
First, for each $i \in \{2, \ldots, q\},$ we select a vertex $u_i \in K_{l_i}.$ Next we delete $l_i-1$ edges between $u_i$ and $V(K_{l_i})\setminus \{u_i\}$ and add all $l_1({l_i}-1)$ edges between $V(K_{l_i})\setminus \{u_i\}$ and $V(K_{l_1}).$
By repeating this operation for each $i \in \{2, \ldots, q\}$, we obtain $K_s \vee (K_{n-s-q+1}\cup (q-1)K_1).$
Since $l_1\geq l_2 \geq \cdots \geq l_q,$ we have $l_1({l_i}-1)\geq l_i-1.$
Hence $e(K_s\vee (K_{l_1}\cup K_{l_2}\cup \cdots \cup K_{l_q})) \leq e(K_s \vee (K_{n-s-q+1}\cup (q-1)K_1)).$
\end{proof}

\section{Proofs}

In this section, we give the proofs of Theorems \ref{main0} and \ref{main1}. Before presenting our proofs, we first introduce some necessary lemmas.

\begin{lem}\label{lem3.1}
Let $1 \leq a \leq b$ be two integers, and let $G$ be a graph of order $n\geq 3a+4.$ Suppose that $G$ is not $[a,b]$-covered, that is, there exist two subsets $S,T \subseteq V(G)$ with $S\cap T = \emptyset$ such that
\begin{eqnarray}\label{eq1}
\theta_G(S,T) = b|S|-a|T|+\sum_{x\in T} d_{G-S}(x)-o_G(S,T) < \varepsilon(S,T)\leq 2.
\end{eqnarray}
If $b\geq 2,$ $e(\overline{G}) \leq n-\lceil\frac{a}{2}\rceil-1$ and $|S|\leq |T|+1,$ then $|T|\leq 4.$\\
\end{lem}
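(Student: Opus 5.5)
The plan is to argue by contradiction. Assume $t:=|T|\ge 5$, write $s:=|S|\le t+1$, and show $\theta_G(S,T)\ge 2$, which contradicts (\ref{eq1}): there $\theta_G(S,T)<\varepsilon(S,T)\le 2$, i.e.\ $\theta_G(S,T)\le 1$. Only two inputs are available: the budget $e(\overline{G})\le n-\lceil a/2\rceil-1$, and the fact that $S$ and $T$ are large, so each vertex of $T$ has almost all of $V(G)\setminus S$ as neighbours. I would lower-bound $\sum_{x\in T}d_{G-S}(x)$ and upper-bound $o_G(S,T)$, charging both against the same set of non-edges.

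\emph{Accounting.} Let $R=V(G)\setminus(S\cup T)$ and let $c$ be the number of components of $G-S-T=G[R]$. Trivially $o_G(S,T)\le c$, and $o_G(S,T)=0$ when $a<b$, since then every component is neutral. Let $m_T$ be the number of non-edges of $G$ with at least one end in $T$ and the other end outside $S$. Each such non-edge lowers $\sum_{x\in T}d_{G-S}(x)$ below $t(n-s-1)$ by at most $2$, so
\[
\sum_{x\in T}d_{G-S}(x)\ \ge\ t(n-s-1)-2m_T .
\]
If $c\ge 1$, the $c$ components of $G[R]$ are pairwise non-adjacent, which gives at least $c-1$ further non-edges inside $R$, disjoint from those counted in $m_T$. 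Hence $m_T+c-1\le e(\overline{G})\le n-\lceil a/2\rceil-1$, i.e.\ $2m_T+o_G(S,T)\le 2m_T+c\le 2(n-\lceil a/2\rceil)-c$. Substituting gives
\[
\theta_G(S,T)\ \ge\ bs-at+t(n-s-1)-2n+2\lceil a/2\rceil .
\]
This is where the crude estimate $o_G(S,T)\le n-s-t$ loses: it only gives $\theta_G(S,T)\ge 0$ at $t=5$, $s=6$, $a=1$, $b=2$, whereas the component/non-edge trade-off recovers the missing $2$.

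\emph{Optimisation.} For fixed $t$, the right-hand side is linear in $s$ with slope $b-t$. I would check two ranges.
\begin{itemize}
\item For $s\le t+1$ and $t\ge 5$ with $n-s-1$ large, the dominant term $t(n-s-1)$ far exceeds $at+2n$.
\item The delicate regime is $s$ close to $t+1$ with $s+t$ close to $n$, so that $n-s-1$ is about $t-1$ or $t$. There I would use $s+t\le n$ and $s\le t+1$, which force $t\ge (n-1)/2$. This makes $t(n-s-1)\ge t(t-2)$, and $t(t-2)$ is quadratic in $t\approx n/2$, hence larger than $2n+at$ once $n\ge 3a+4$ and $t\ge 5$.
\end{itemize}
The expression is concave in $t$, so it suffices to verify the extreme points, namely $t=5$ with $s=\min\{6,n-5\}$, and $t=\lfloor (n-1)/2\rfloor$ with $s=n-t$. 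At each, the hypotheses $n\ge 3a+4$ and $b\ge 2$ should give $\theta_G(S,T)\ge 2$.

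\emph{Case $R=\emptyset$ ($c=0$).} Here $o_G(S,T)=0$, $s=n-t\le t+1$, and $\sum_{x\in T}d_{G-S}(x)=2e(G[T])\ge t(t-1)-2e(\overline G)$. This gives $\theta_G(S,T)\ge b(n-t)-at+t(t-1)-2n+a+2$, which again is at least $2$ for $t\ge 5$ under $n\ge 3a+4$, by the same endpoint check.

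The main obstacle is the boundary regime where $S\cup T$ nearly exhausts $V(G)$ and $a$ is as large as $n$ allows, i.e.\ $a\approx (n-4)/3$. There the quadratic gain $t(n-s-1)$ competes with $at$. If the inequality turns out not to close, I would first try the observation that every vertex has degree at least $a$, so the total non-edge budget is forced into a specific shape when $a$ is large. Failing that, I would treat the finitely many borderline values of $t$ near $5$ by hand.
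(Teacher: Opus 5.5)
Your accounting step is sound. With $c$ the number of components of $G-S-T$, the bound $\theta_G(S,T)\ge bs-at+t(n-s-1)-2n+2\lceil a/2\rceil$ is correct; you even discard an extra $+c$, or $+2$ when $c=0$. The gap is that you never show this quantity is at least $2$ when $t\ge 5$, which is the whole content of the lemma, and the reasons you sketch do not hold.

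\begin{itemize}
\item The bound is not concave in $t$. For fixed $s$ it is linear in $t$, and it contains the saddle term $-ts$. Along the edge $s=n-t$ it equals $b(n-t)-at+t(t-1)-2n+2\lceil a/2\rceil$, which is convex in $t$. So checking your two ``extreme points'' proves nothing.
\item You also omit the endpoint $s=0$, which is the minimiser in $s$ whenever $b>t$.
\item Your claim that $t(t-2)$ dominates $2n+at$ is false at $t=5$: for $n=11$ it reads $15<22+5a$. At $t=5$, $s=t$, $n=2t$ the quadratic gain $t(n-s-1)-2n$ is exactly $0$. There only $bs-at$ (with $b\ge\max\{a,2\}$) and the term $2\lceil a/2\rceil$ keep the bound from dropping below $2$.
\end{itemize}
In fact the minimum of your bound over the feasible region is exactly $2$, attained at $a=b=2$, $t=5$, $(s,n)\in\{(5,10),(6,11)\}$. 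There is no slack for ``should give''; the computation has to be done.

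Your diagnosis of where the difficulty lies is also off: the crude estimate $o_G(S,T)\le n-s-t$ is not the problem. At $t=5$, $s=6$, $a=1$, $b=2$ one has $n\ge 11$, and the crude estimate gives $\theta_G(S,T)\ge 2n-13\ge 9$, not $0$. The paper uses exactly this crude estimate and finishes by rearranging instead of optimising. From $\theta_G(S,T)\le 1$, $\sum_{x\in T}d_{G-S}(x)\ge (n-s-1)t-2e(\overline{G})$, $o_G(S,T)\le n-s-t$ and $2e(\overline{G})\le 2n-a-2$, one gets
\[
t(n-s-a-1)\le 3(n-s-a-1)+(2-b)s-t+2a+2 .
\]
Since $s\le t+1$ and $s+t\le n$ force $s\le \frac{n+1}{2}$, and $n\ge 3a+4$, the factor $n-s-a-1$ is at least $\frac{a+1}{2}>0$. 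Dividing and using $b\ge 2$ and $t\ge s-1$ gives $t\le 3+\frac{2a+3-s}{n-s-a-1}$. The fraction is at most $1$ precisely because $n\ge 3a+4$, so $t\le 4$. Each hypothesis is used once, and neither component counting nor a separate case $R=\emptyset$ is needed.
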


\begin{proof}
Denote $s = |S|,$ $t = |T|,$ $T'=G-S-T,$ $t' = |V(T')|$ and $o=o_G(S,T).$
Then
\begin{eqnarray}\label{eq2}
\sum_{x\in T} d_{G-S}(x) &=& 2e_{G-S}(T)+e_{G-S}(T,T')\nonumber\\
&\geq& (n-s-1)t-2e(\overline{G}).
\end{eqnarray}
Moreover, by (\ref{eq1}) and $n\geq s+t+o$, we obtain that
\begin{eqnarray}\label{eq3}
\sum_{x\in T} d_{G-S}(x) &\leq& at-bs+o+1\nonumber\\
&\leq& at-bs+(n-s-t)+1.
\end{eqnarray}
Combining (\ref{eq2}) and (\ref{eq3}), we have
\begin{eqnarray}\label{eq4}
(n-s-1)t-2e(\overline{G}) \leq at-bs+(n-s-t)+1.
\end{eqnarray}
By $s\leq t+1,$ $s\leq \frac{n+1}{2}.$ Since $n\geq 3a+4,$ it follows that $n-s-a-1\geq \frac{n-1}{2}-a-1>0.$ Note that $e(\overline{G}) \leq n-\lceil\frac{a}{2}\rceil-1\leq n-\frac{a}{2}-1.$ Combining (\ref{eq4}), $b\geq 2,$ $s\leq t+1$ and $n\geq 3a+4,$ we have
\begin{eqnarray*}
t &\leq& 3+\frac{(-b+2)s-t+2a+2}{n-s-a-1}\\
&\leq& 3+\frac{-s+2a+3}{n-s-a-1}\leq 4,
\end{eqnarray*}
as desired.
\end{proof}

\begin{lem}\label{lem3.2}
Let $a < b$ be two positive integers, and let $S,T \subseteq V(G)$ and $S \cap T = \emptyset.$
Then $|S| \geq \varepsilon(S,T).$
\end{lem}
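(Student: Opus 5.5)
Since $a<b$, there are no odd or even components: every component of $G-S-T$ is neutral. The plan is to go through the definition of $\varepsilon(S,T)$ case by case and bound its value by $|S|$ in each case.

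First, condition (2) in the definition of $\varepsilon(S,T)=2$ refers to an even component. No such component exists when $a\neq b$, so condition (2) can never hold. Hence $\varepsilon(S,T)=2$ happens exactly when condition (1) holds, namely when $S$ is not independent. A non-independent set contains an edge, so it has at least two vertices, and therefore $|S|\geq 2=\varepsilon(S,T)$.

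Next, suppose $\varepsilon(S,T)=1$. By clause (ii) of the definition, some neutral component $C$ of $G-S-T$ has an edge joining $C$ to $S$. The endpoint of that edge on the $S$ side is a vertex of $S$, so $S\neq\emptyset$ and $|S|\geq 1=\varepsilon(S,T)$.

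Finally, if $\varepsilon(S,T)=0$, the inequality $|S|\geq 0$ is trivial. The three cases cover all possible values of $\varepsilon(S,T)$, which proves the lemma. No step is a real obstacle. The only point needing care is the observation that the parity clause (2) is vacuous when $a<b$, because the odd/even classification of components is defined only for $a=b$.
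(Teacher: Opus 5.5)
Your proposal is correct and is essentially the paper's argument: the paper splits on $|S|\in\{0,1,\geq 2\}$ rather than on the value of $\varepsilon(S,T)$, but it uses the same two facts. Those facts are that all components are neutral when $a<b$, so clause (2) is vacuous and $\varepsilon(S,T)=2$ forces $S$ to be non-independent, and that $\varepsilon(S,T)=1$ requires an edge into $S$. You state the vacuity of clause (2) explicitly, whereas the paper uses it only implicitly in its $|S|=1$ case.
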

\begin{proof}
Since $a<b,$ all components of $T'$ are neutral.
If $|S|=0,$ then $S$ is independent and there is no edge between $S$ and the components of $T',$ which implies that $\varepsilon(S,T)=0=|S|.$
If $|S|=1,$ then $S$ is an independent set. Hence $\varepsilon(S,T) \leq 1=|S|.$
If $|S|\geq 2,$ then $|S|\geq 2 \geq \varepsilon(S,T).$
Hence we obtain that $|S|\geq \varepsilon(S,T).$
\end{proof}

Before proving our main theorems, we establish the following crucial lemma by applying the minimum-degree forcing technique, which effectively forces the graph to be $[a,b]$-covered under the upper bound of $e(\overline{G})$.

\begin{lem}\label{lem3.3}
Let $1 \leq a \leq b$ be two integers and $G$ be a graph of order $n$ with $\delta(G)\geq a,$ $n \geq 3a+4$ and $na\equiv 0\pmod{2}$ if $a=b.$\\
(i) If $b\geq 2$ and $e(\overline{G})\leq n-\lceil\frac{a}{2}\rceil-1,$ then $G$ is $[a,b]$-covered.\\
(ii) If $a=1,$ $b\geq 2$ and $e(\overline{G})\leq n-1,$ then $G$ is $[a,b]$-covered.
\end{lem}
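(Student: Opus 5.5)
We argue by contradiction via Theorem \ref{th1.4}. Suppose $G$ is not $[a,b]$-covered, and choose disjoint $S,T$ with $\theta_G(S,T)<\varepsilon(S,T)\le 2$. Write $s=|S|$, $t=|T|$, $o=o_G(S,T)$. The plan is to first force $s\le t+1$, then invoke Lemma \ref{lem3.1} to get $t\le 4$, and finally rule out each small value of $t$ using $\delta(G)\ge a$ together with the bound on $e(\overline{G})$. In (ii) the hypothesis $e(\overline{G})\le n-1$ coincides with $n-\lceil a/2\rceil-1$ only up to one extra missing edge; since $a=1$ the inequalities below have one unit of slack, so the same scheme applies.

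\emph{Step 1: reducing to $s\le t+1$.} Since $o\le n-s-t$ and $\sum_{x\in T}d_{G-S}(x)\ge 0$, the inequality gives $bs-at-(n-s-t)<2$. This alone is too weak, so we bound $o$ more carefully. Every component $C$ of $G-S-T$ that is not complete to $S\cup T$ contributes non-edges to $\overline{G}$. If $s\ge t+2$ and $b\ge 2$, then $bs-at\ge 2s-at$ is large, so we would need roughly $bs$ odd components. Most of these must be small, and small components are missing many edges to each other: $k$ components already force about $\binom{k}{2}$ non-edges. This contradicts $e(\overline{G})\le n-1$ once $k\ge 3$ or so, because $n\ge 3a+4$. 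So $o\le 2$, and then $bs\le at+o+1<at+4$, which with $b\ge a$ gives $s\le t+1$ (handling $a=b$ and $a<b$ with Lemma \ref{lem3.2} if needed). We expect this to be a routine count of edges of $\overline{G}$.

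\emph{Step 2: the small cases $t\le 4$.} Lemma \ref{lem3.1} now gives $t\le4$, and hence $s\le 5$. By the same component count, $G-S-T$ has at most two components, and one of them contains at least $n-s-t-1$ vertices. For $x\in T$ we have $d_{G-S}(x)\ge \delta-s\ge a-s$, and also $d_{G-S}(x)\ge n-s-1-(\text{non-edges at }x)$. We split by $t$:
\begin{itemize}
\item[$t=0$:] The condition reads $bs-o<\varepsilon\le 2$ with $s\le1$. If $s=0$, then $o\le 1$, and parity ($na$ even when $a=b$) gives $o=0$; also $\varepsilon=0$ because no even component has a cut edge of the required type, as $G$ is dense. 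If $s=1$, then $b\le o+1$ forces $o\ge 1$, which needs a component attached only through the single vertex of $S$. That component is either small (and then violates $\delta\ge a$) or costs too many non-edges.
\item[$t\ge1$:] Summing the degrees of $T$ in $G-S$ gives $\sum d_{G-S}(x)\ge t(n-s-1)-2e(\overline{G})$. A direct comparison with $at-bs+o+2$ fails unless $\overline{G}$ concentrates almost all of its roughly $n$ non-edges at $T$. In that case the vertices of $T$ have degree close to $\delta$, and it is exactly the condition $\delta\ge a$ that then pushes $\sum_{x\in T}d_{G-S}(x)\ge at-$ small.
\end{itemize}

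The main obstacle is the borderline configuration resembling $H_{n,a}$, namely $S=\emptyset$, $T=\{v\}$ with $d(v)$ about $a-1$. Here $\delta\ge a$ is what yields $\theta\ge a-a+\ldots$. We must also check the $\varepsilon=2$ subcase, where an even component with a parity cut edge could appear. We would handle it by showing that the big component is 2-edge-connected, since its minimum degree is large, and that the only candidate cut edges lie in tiny components, which do not exist by Step 1.
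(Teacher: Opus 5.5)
Your skeleton matches the paper's: contradiction via Theorem \ref{th1.4}, cap $t$ with Lemma \ref{lem3.1}, use $\delta\ge a$ for small $t$, count non-edges. However, Step 1 has a genuine gap. The gap only matters for $a=b$, since for $a<b$ there are no odd components and $s\ge t$ already gives $\theta\ge bs-at\ge s\ge\varepsilon$ by Lemma \ref{lem3.2}.

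You claim that $k\ge 3$ components of $G-S-T$ contradict the bound on $e(\overline{G})$. This is false. $k$ singleton components cost only $\binom{k}{2}$ non-edges, and the correct lower bound $\binom{m}{2}-\binom{m-k+1}{2}$, with $m=n-s-t$, is large only when $m$ is. Concretely, take $a=b=2$, $n=12$, $\overline{G}=K_4$ on a set $W$, and let $S,T$ with $|S|=5$, $|T|=3$ partition $V\setminus W$. Then $s=t+2$, $o=4$ (each singleton has $e_G(T,\{w\})+b=5$), and $e(\overline{G})=6\le n-2$.

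What excludes such configurations is the term $\sum_{x\in T}d_{G-S}(x)$, which you explicitly discarded at the start of Step 1. The paper keeps it. For $t\ge 2$ it proves $q\le 2$ (Claim \ref{cla6}) from (\ref{eq15})--(\ref{eq16}), and treats $s\ge n/2$ separately via $s\ge t+q$, which gives $\theta\ge aq-o\ge q$. Only after that does $s\ge t+2$ yield $\theta\ge 2a-o\ge 2$. The cases $t=0$, $s\ge 2$ and $t=1$, $s\ge 1$ get their own arguments: the compression of Lemma \ref{lem2.3} with $q\ge as-1$, and the degree bound (\ref{eq13}) on $x_0$ together with $o\le n-5$. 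The same objection applies to your Step 2 assertion that $G-S-T$ has at most two components ``by the same count''. With $s\le 5$ and $t\le 4$ that count beats $n-2$ only for $n\ge 20$, while $n$ can be as small as $10$.

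Second, the borderline case you single out ($a=b$, $T=\{x_0\}$, $S=\emptyset$) is not actually resolved. There $\theta=d_G(x_0)-a-o$, so $\delta\ge a$ only gives $\theta\ge -o$. The missing idea is parity. Show that $G-x_0$ is connected; then, when $d_G(x_0)=a$, you have $e_G(T,V(T'))+a|V(T')|=a+a(n-1)=an\equiv 0\pmod 2$, so $T'$ is even and $o=0$. This is exactly where the hypothesis $na\equiv 0\pmod 2$ is needed, and the same mechanism drives Claim \ref{cla5} for $t=1$, $s\ge 1$. You invoke parity only for $t=0$.

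Your plan for ruling out $\varepsilon=2$ also fails. $G-x_0$ need not have large minimum degree: only $\delta(G-x_0)\ge a-1$ holds, which is $1$ when $a=2$, so 2-edge-connectivity does not follow. The paper instead shows that a parity cut edge must split off a single vertex. That uses up at least $n-3$ non-edges, so either the budget is exceeded or $d_{G-S}(x_0)\ge n-2$ and $\theta$ is large (Claim \ref{cla4}).

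Finally, your diagnosis for $t\ge 2$ is off, though harmlessly. There the non-edges inside $T$ number at most $\binom{t}{2}$, so the vertices of $T$ cannot all have degree near $\delta$. The refined count in (\ref{eq6}) and (\ref{eq17}) already beats $\varepsilon$ without any appeal to $\delta\ge a$.
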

\begin{proof}
(i) Suppose to the contrary that $G$ is not $[a,b]$-covered.
We distinguish our proof into two cases.

\vspace{1.5mm}
\noindent\textbf{Case 1.} $1 \leq a < b.$
\vspace{1.5mm}

By Theorem \ref{th1.4}, there exist two subsets $S,T \subseteq V(G)$ with $S\cap T = \emptyset$ such that
\begin{eqnarray}\label{eq5}
\theta_G(S,T) = b|S|-a|T|+\sum_{x\in T} d_{G-S}(x) < \varepsilon(S,T),
\end{eqnarray}
where
\begin{align*}
\begin{split}
\varepsilon(S,T)= \left \{
\begin{array}{ll}
    2,                    & S \mbox{ is not independent},\\
    1,                    & \mbox{$S$ is independent and there is a component $C$ of $G-S-T$ such}\\
                          & \mbox{that there exists an edge joining $C$ and $S,$}\\
    0,                    & \mbox{otherwise}.
\end{array}
\right.
\end{split}
\end{align*}
Let $s = |S|$ and $t = |T|.$
Since $\delta(G) \geq a,$ $\delta(G-S) \geq a-s.$
If $t \leq b-1,$ by $1 \leq a < b$ and Lemma \ref{lem3.2}, then
\begin{eqnarray*}
\theta_G(S,T) &=& bs-at+\sum_{x\in T} d_{G-S}(x)\\
&\geq& bs-at+t(a-s)\\
&=& (b-t)s\\
&\geq& s \geq \varepsilon(S,T),
\end{eqnarray*}
which contradicts (\ref{eq5}). Hence $t \geq b\geq 2.$

\begin{claim}\label{cla1}
$s \leq t-1 \leq 3.$
\end{claim}
\begin{proof}
Suppose that $s \geq t.$ Combining $b\geq a+1$ and Lemma \ref{lem3.2}, we have
\begin{eqnarray*}
\theta_G(S,T)\geq bs-at \geq (a+1)s-at \geq s \geq \varepsilon(S,T),
\end{eqnarray*}
which contradicts (\ref{eq5}).
Hence $s\leq t-1.$
Recall that $n\geq 3a+4,$ $e(\overline{G}) \leq n-\lceil\frac{a}{2}\rceil-1$ and $G$ is not $[a,b]$-covered. Combining $s\leq t-1,$ $b\geq 2$ and Lemma \ref{lem3.1}, we have $t \leq 4.$ Hence $s \leq t-1 \leq 3.$
\end{proof}
Define $T'=G-S-T$ and $t'=|V(T')|.$ Since $e(\overline{G})\leq n-\lceil\frac{a}{2}\rceil-1,$ $2e(\overline{G}[T])+e_{\overline{G}}(T,V(T')) \leq t(t-1)+\Big(n-\frac{t(t-1)}{2}-\lceil\frac{a}{2}\rceil-1\Big).$
Hence we have
\begin{eqnarray}\label{eq6}
\sum_{x\in T}d_{G-S}(x) &\geq& t(t+t'-1)-\Big(t(t-1)+\Big(n-\frac{t(t-1)}{2}-\Big\lceil\frac{a}{2}\Big\rceil-1\Big)\Big)\nonumber\\
&=& tt'-\Big(n-\frac{t(t-1)}{2}-\Big\lceil\frac{a}{2}\Big\rceil-1\Big).
\end{eqnarray}
By (\ref{eq6}), we have
\begin{eqnarray*}
\theta_G(S,T)&=& bs-at+\sum_{x\in T} d_{G-S}(x)\\
&\geq& (a+1)s-at+tt'-\Big(n-\frac{t(t-1)}{2}-\Big\lceil\frac{a}{2}\Big\rceil-1\Big)\\
&=& (a+1)s-at+t(n-s-t)-\Big(n-\frac{t(t-1)}{2}-\Big\lceil\frac{a}{2}\Big\rceil-1\Big)\\
&=& -\frac{t^2}{2} + \Big(n-s-a-\frac{1}{2}\Big)t+(a+1)s-n+\Big\lceil\frac{a}{2}\Big\rceil+1.
\end{eqnarray*}
Let $f(t)=-\frac{t^2}{2} + \left(n-s-a-\frac{1}{2}\right)t+(a+1)s-n+\lceil\frac{a}{2}\rceil+1.$ Combining $t\geq b\geq 2$ and Claim \ref{cla1}, we have $2\leq t\leq 4.$
Hence $f(t) \geq \min\{f(2),f(4)\}.$
It follows from $n\geq 3a+4$ and Claim \ref{cla1} that $f(2)=n+(a-1)s-2a+\lceil\frac{a}{2}\rceil-2>2$ and $f(4)=3n+(a-3)s-4a+\lceil\frac{a}{2}\rceil-9\geq 5a+\lceil\frac{a}{2}\rceil-3> 2.$
Hence $f(t)> 2\geq \varepsilon(S,T),$ which contradicts (\ref{eq5}).

\vspace{1.5mm}
\noindent\textbf{Case 2.} $2 \leq a = b.$
\vspace{1.5mm}

By Theorem \ref{th1.4}, there exist two subsets $S,T \subseteq V(G)$ with $S\cap T = \emptyset$ such that
\begin{eqnarray*}
\theta_G(S,T) = as-at+\sum_{x\in T} d_{G-S}(x)-o_G(S,T) < \varepsilon(S,T),
\end{eqnarray*}
where
\begin{align*}
\begin{split}
\varepsilon(S,T)= \left \{
\begin{array}{ll}
    2,                    & \mbox{$S$ is not independent, or}\\
                          & \mbox{there is an even component $C$ of $G-S-T$ such that there exists}\\
                          & \mbox{an edge joining $C$ and $S$ or there exists a cut edge $e$ of $C$ such that }\\
                          & \mbox{$e_G(T,V(C_i))+a|V(C_i)| \equiv 0\mbox{ (mod }2)$ for $i= 1,2,$ where $C_1$ and $C_2$ are}\\
                          & \mbox{the components of $C-e,$}\\
    0,                    & \mbox{otherwise}.
\end{array}
\right.
\end{split}
\end{align*}

Next we divide the following proof into three cases.

\vspace{1.5mm}
\noindent\textbf{Case 2.1.} $t = 0.$
\vspace{1.5mm}

In this case, we have
\begin{eqnarray}\label{eq7}
\theta_G(S,T) = as-o_G(S,T)< \varepsilon(S,T).
\end{eqnarray}

\vspace{1.5mm}
\noindent\textbf{Case 2.1.1.} $s = 0.$
\vspace{1.5mm}

Since $e(\overline{G})\leq n-\lceil\frac{a}{2}\rceil-1,$ $G$ is connected. Note that $e(T, V(G))+b|V(G)|=na\equiv 0 \pmod{2}.$ Then $G-S-T$ is an even component, and hence $\theta_G(S,T)=-o_G(S,T) = 0.$
Note that $S=\emptyset$. If there is no cut edge in $G,$ then $\varepsilon(S,T)=0,$ and hence $\theta_G(S,T) =\varepsilon(S,T),$ which contradicts (\ref{eq7}).
Hence there exists a cut edge $e$ in $G.$
Let $C_1$ and $C_2$ are two components of $G-e.$
Without loss of generality, let $|V(C_1)|\leq|V(C_2)|.$
We claim that $|V(C_1)| = 1$ and $|V(C_2)| = n-1.$ Otherwise, $e(\overline{G})\geq |V(C_1)||V(C_2)|-1\geq 2(n-2)-1 > n-2\geq n-\lceil\frac{a}{2}\rceil-1,$ which contradicts $e(\overline{G})\leq n-\lceil\frac{a}{2}\rceil-1.$
Then $G\cong H_{n,2},$ and hence $\delta(G)=1,$ which contradicts $\delta(G)\geq a.$

\vspace{1.5mm}
\noindent\textbf{Case 2.1.2.} $s = 1.$
\vspace{1.5mm}

We first prove the following claim.
\begin{claim}\label{cla2}
$G-S$ is a subgraph of $K_{n-2}\cup K_1.$
\end{claim}
\begin{proof}
Suppose that $G-S$ has only one component $C.$
If $C$ is even, then $\theta_G(S,T) = as-o_G(S,T) =a \geq 2 \geq \varepsilon(S,T),$ contradicting (\ref{eq7}).
Note that $S$ is independent.
If $C$ is odd, then $\varepsilon(S,T) = 0,$ and hence $\theta_G(S,T) = as-o_G(S,T) \geq a-1 \geq 1 > \varepsilon(S,T),$ which contradicts (\ref{eq7}).
So $G-S$ has at least two components.
Suppose to the contrary that $G-S$ is not a subgraph of $K_{n-2}\cup K_1.$
By $n\geq 3a+4,$ we have
\begin{eqnarray*}
e(\overline{G}) \geq 2(n-3)> n-\Big\lceil\frac{a}{2}\Big\rceil-1,
\end{eqnarray*}
a contradiction.
\end{proof}
By Claim \ref{cla2}, $G$ is a subgraph of $H_{n,2}.$ Hence $\delta(G)\leq 1,$ which contradicts $\delta(G)\geq a.$

\vspace{1.5mm}
\noindent\textbf{Case 2.1.3.} $s\geq 2.$
\vspace{1.5mm}

Let $q$ be the number of components of $T'.$
By (\ref{eq7}), $as-q\leq as - o_G(S,T)\leq 1.$ Then $q \geq as - 1.$
Let $l_1\geq l_2 \geq \cdots \geq l_q$ be the number of vertices of $q$ components of $T'.$
Define $G' = K_s\vee (K_{l_1}\cup K_{l_2}\cup \cdots \cup K_{l_q}).$
Then $G$ is a spanning subgraph of $G',$ and hence $e(G) \leq e(G').$
According to Lemma \ref{lem2.3}, we have $e(G') \leq e(K_s \vee (K_{n-s-(q-1)}\cup (q-1)K_1)).$
By $q \geq as - 1,$ $s\leq \frac{1}{a}(q+1).$
Combining $q\geq as-1,$ $n\geq s+q$ and $s\leq \frac{1}{a}(q+1),$ we have
\begin{eqnarray*}
e(\overline{G})-\Big(n-\Big\lceil\frac{a}{2}\Big\rceil-1\Big)&\geq& e(\overline{G'})-\Big(n-\Big\lceil\frac{a}{2}\Big\rceil-1\Big)\\
&\geq&(q-1)(n-s-(q-1))+\frac{(q-1)(q-2)}{2}-\Big(n-\frac{a}{2}-1\Big)\\
&=& (q-2)n-\frac{q^2}{2}-\Big(s-\frac{1}{2}\Big)q+s+\frac{a}{2}+1\\
&\geq& \frac{q^2}{2}-\frac{3}{2}q-s+\frac{a}{2}+1\\
&\geq& \frac{q^2}{2}-\Big(\frac{3}{2}+\frac{1}{a}\Big)q-\frac{1}{a}+\frac{a}{2}+1.
\end{eqnarray*}
Let $f(q)=\frac{q^2}{2}-(\frac{3}{2}+\frac{1}{a})q-\frac{1}{a}+\frac{a}{2}+1.$
Then the axis of symmetry of $f(q)$ is $q=\frac{1}{a}+\frac{3}{2}\leq 2,$ which implies that $f(q)$ is increasing for $q \geq 3.$
If $q \geq 4$, then $f(q)\geq f(4) = \frac{a}{2} - \frac{5}{a} + 3.$ Since $a \geq 2$, $f(4)>0.$ Then $e(\overline{G})>n-\lceil\frac{a}{2}\rceil-1,$ a contradiction.
Therefore, we only need to consider $q=3$. Combining $as-1\leq q=3,$ $a\geq 2$ and $s\geq 2,$ we have $a=2$ and $s=2.$ By $n\geq 3a+4=10,$ we have
\begin{eqnarray*}
e(\overline{G})-\Big(n-\Big\lceil\frac{a}{2}\Big\rceil-1\Big)&\geq& e(\overline{G'})-\Big(n-\Big\lceil\frac{a}{2}\Big\rceil-1\Big)\\
&\geq&(q-1)(n-s-(q-1))+\frac{(q-1)(q-2)}{2}-\Big(n-\frac{a}{2}-1\Big)\\
&=& n-5>0,
\end{eqnarray*}
which implies that $e(\overline{G})>n-\Big\lceil\frac{a}{2}\Big\rceil-1,$ a contradiction.

Now we are in the position to consider the case of $t=1.$

\vspace{1.5mm}
\noindent\textbf{Case 2.2.} $t=1.$
\vspace{1.5mm}

Let $T=\{x_0\}.$ Then we have
\begin{eqnarray}\label{eq8}
 \theta_G(S,T)=a(s-1)+d_{G-S}(x_0)-o_G(S,T)<\varepsilon(S,T).
\end{eqnarray}

\vspace{1.5mm}
\noindent\textbf{Case 2.2.1.} $s=0.$
\vspace{1.5mm}

\begin{claim}\label{cla3}
$q=1.$
\end{claim}
\begin{proof}
Assume to the contrary that $q\geq 2.$ Let $C_1$ and $C_2$ be two components of $T'.$
Without loss of generality, let $|V(C_1)|\leq|V(C_2)|.$
We claim that $q=2,$ $|V(C_1)| = 1$ and $|V(C_2)| =n-2.$
Otherwise, by $n\geq 3a+4,$ we have
\begin{eqnarray*}
e(\overline{G}) \geq |V(C_1)||V(C_2)| \geq 2(n-3) > n-\Big\lceil\frac{a}{2}\Big\rceil-1,
\end{eqnarray*}
which contradicts $e(\overline{G})\leq n-\lceil\frac{a}{2}\rceil-1$.
Hence we have
\begin{eqnarray}\label{eq9}
e(\overline{G})\geq e(\overline{G}[T'])\geq n-2.
\end{eqnarray}
If $a\geq 3,$ then $e(\overline{G})\leq n-\lceil\frac{a}{2}\rceil-1=n-3,$ which contradicts (\ref{eq9}).
If $a=2,$ then $e(\overline{G})\leq n-2.$ By (\ref{eq9}), $e(\overline{G}) = e(\overline{G}[T']) = n-2.$ Hence
$d_{G-S}(x_0) = n-1.$ Note that $o_G(S,T)\leq q= 2.$
Combining $n\geq 3a+4$ and $a\geq 2,$ we have
\begin{eqnarray*}
 \theta_G(S,T)=-a+d_{G-S}(x_0)-o_G(S,T)\geq n-a-3\geq 2a+1\geq 5> \varepsilon(S,T),
\end{eqnarray*}
which contradicts (\ref{eq8}).
\end{proof}

\begin{claim}\label{cla4}
$\varepsilon(S,T) = 0.$
\end{claim}
\begin{proof}
Suppose to the contrary that $\varepsilon(S,T) =2.$ By Claim \ref{cla3}, $q=1.$ Note that $s=0.$ Then $S$ is independent.
Hence there exists a cut edge $e$ in $T'.$
Let $C'_1$ and $C'_2$ be two components of $T'-e.$
Without loss of generality, let $|V(C'_1)|\leq|V(C'_2)|.$
We claim that $|V(C'_1)| = 1$ and $|V(C'_2)| = n-2.$
Otherwise, combining $n\geq 3a+4$ and $a\geq 2,$ we can obtain that
\begin{eqnarray*}
e(\overline{G})\geq e(\overline{G}[T'])\geq |V(C'_1)||V(C'_2)|-1\geq 2(n-3)-1> n-\Big\lceil\frac{a}{2}\Big\rceil-1,
\end{eqnarray*}
contrary to $e(\overline{G})\leq n-\lceil\frac{a}{2}\rceil-1.$
Then $|V(C'_1)| = 1$ and $|V(C'_2)| = n-2,$ and hence $e(\overline{G}[V(T')])\geq n-3.$ If $a\geq 5,$ then $e(\overline{G})\geq e(\overline{G}[V(T')])\geq n-3>n-\lceil\frac{a}{2}\rceil-1,$ a contradiction.
If $2\leq a\leq 4,$ then we have
\begin{eqnarray}\label{eq10}
d_{G-S}(x_0)\geq n-1-(e(\overline{G})-e(\overline{G}[V(T')]))\geq n-2.
\end{eqnarray}
Combining (\ref{eq10}), Claim \ref{cla3}, $n\geq 3a+4$ and $2\leq a\leq 4,$ we have
\begin{eqnarray*}
 \theta_G(S,T)=-a+d_{G-S}(x_0)-o_G(S,T)\geq n-a-3\geq 2a+1> \varepsilon(S,T),
\end{eqnarray*}
which contradicts (\ref{eq8}).
\end{proof}

Recall that $\delta(G)\geq a.$ By Claim \ref{cla3}, $q=1.$ If $d_{G}(x_0) = a,$ then $e_{G}(T, V(T'))+a|V(T')| = d_{G}(x_0)+a(n-1) = na \equiv 0 \pmod{2},$ which implies that $T'$ is even. Hence $o_G(S,T) = 0.$
According to Claims \ref{cla3} and \ref{cla4}, we have $\theta_G(S,T)=0=\varepsilon(S,T),$ contrary to (\ref{eq8}).
If $d_G(x_0) \geq a+1,$ then $\theta_G(S,T)\geq 1-o_G(S,T)\geq 0 = \varepsilon(S,T),$ which contradicts (\ref{eq8}).

\vspace{1.5mm}
\noindent\textbf{Case 2.2.2.} $s \geq 1.$
\vspace{1.5mm}

We first prove the following claim.
\begin{claim}\label{cla5}
$q\geq 2.$
\end{claim}
\begin{proof}
Suppose to the contrary that $q=1.$
Then $e_{G}(T,V(T'))+at'=d_{G-S}(x_0)+a(n-s-1)=d_{G-S}(x_0)+a(s-1)+a(n-2s).$ Since $na\equiv 0\pmod{2},$ it follows that $a(n-2s)\equiv 0\pmod{2}.$ Hence we have
\begin{eqnarray}\label{eq11}
e_{G}(T,V(T'))+at'\equiv d_{G-S}(x_0)+a(s-1)\pmod{2}.
\end{eqnarray}
By $\delta(G)\geq a,$ $d_{G-S}(x_0)\geq a-s.$ For $a\geq 2$ and $s\geq 1,$ we have
\begin{eqnarray}\label{eq12}
 d_{G-S}(x_0)+a(s-1)\geq s(a-1)\geq 1.
\end{eqnarray}
If $T'$ is an odd component, then $o_G(S,T)=1$ and $\varepsilon(S,T)=0.$ By (\ref{eq12}), we have
\begin{eqnarray*}
 \theta_G(S,T)=a(s-1)+d_{G-S}(x_0)-o_G(S,T)\geq 0\geq \varepsilon(S,T),
\end{eqnarray*}
contradicting (\ref{eq8}).
If $T'$ is an even component, then $e_{G}(T,V(T'))+at'\equiv 0\pmod{2}$ and $o_G(S,T)=0.$ Combining (\ref{eq11}) and (\ref{eq12}), we have $a(s-1)+d_{G-S}(x_0)\geq 2.$ Hence we have
\begin{eqnarray*}
 \theta_G(S,T)=a(s-1)+d_{G-S}(x_0)-o_G(S,T)\geq 2\geq \varepsilon(S,T),
\end{eqnarray*}
contrary to (\ref{eq8}).
\end{proof}

By Claim \ref{cla5}, $T'$ has at least two components, which implies that
$e(\overline{G}[V(T')]) \geq n-s-2.$
Recall that $e(\overline{G})\leq n-\lceil\frac{a}{2}\rceil-1.$ Then we have
\begin{eqnarray}\label{eq13}
d_{G-S}(x_0)\geq n-s-1-\Big(\Big(n-\Big\lceil\frac{a}{2}\Big\rceil-1\Big)-(n-s-2)\Big)=n-2s+\Big\lceil\frac{a}{2}\Big\rceil-2.
\end{eqnarray}
Suppose that $o_G(S,T)\geq n-4.$
Then $e(\overline{G})\geq \frac{o_G(S,T)(o_G(S,T)-1)}{2}\geq \frac{(n-4)(n-5)}{2}.$
By $a\geq 2$ and $n\geq 3a+4\geq 10,$ we have
\begin{eqnarray*}
e(\overline{G})-\Big(n-\Big\lceil\frac{a}{2}\Big\rceil-1\Big)&\geq& \frac{(n-4)(n-5)}{2}-\Big(n-\Big\lceil\frac{a}{2}\Big\rceil-1\Big)\\
&\geq& \frac{(n-4)(n-5)}{2}-n+\frac{a}{2}+1\\
&\geq& \frac{(n-4)(n-5)}{2}-n+2\\
&\geq& \frac{1}{2}(n-8)(n-3) > 0,
\end{eqnarray*}
which contradicts $e(\overline{G})\leq n-\lceil\frac{a}{2}\rceil-1$.
Hence $o_G(S,T)\leq n-5.$
Combining (\ref{eq13}), $s\geq 1$ and $a\geq 2,$ we have
\begin{eqnarray*}
\theta_G(S,T)&=& a(s-1)+d_{G-S}(x_0)-o_G(S,T)\\
&\geq& a(s-1)+n-2s+\Big\lceil\frac{a}{2}\Big\rceil-2-(n-5)\\
&\geq& \frac{a}{2}+1 \geq 2\geq \varepsilon(S,T),
\end{eqnarray*}
which contradicts (\ref{eq8}).

\vspace{1.5mm}
\noindent\textbf{Case 2.3.} $t\geq 2.$
\vspace{1.5mm}

In this case, we have
\begin{eqnarray}\label{eq14}
\theta_G(S,T) = as-at+\sum_{x\in T} d_{G-S}(x)-o_G(S,T) < \varepsilon(S,T).
\end{eqnarray}
Let $C_1, C_2, \ldots, C_q$ be the $q$ components of $T'.$ Next we obtain the following claims.

\begin{claim}\label{cla6}
$q\leq 2.$
\end{claim}
\begin{proof}
Suppose to the contrary that $q\geq 3.$ By $e(\overline{G})\leq n-\lceil\frac{a}{2}\rceil-1,$ $e(\overline{G}[T])+e_{\overline{G}}(T,V(T')) \leq (n-\lceil\frac{a}{2}\rceil-1)-\frac{1}{2}\sum^q_{i=1}|C_i|(n-s-t-|C_i|).$
Hence we have
\begin{eqnarray}\label{eq15}
\sum_{x\in T} d_{G-S}(x)&\geq& (n-s-1)t-2\Big(n-\Big\lceil\frac{a}{2}\Big\rceil-\frac{1}{2}\sum^q_{i=1}|C_i|(n-s-t-|C_i|)-1\Big)\nonumber\\
&\geq & (n-s-1)t-2n+2\Big\lceil\frac{a}{2}\Big\rceil+q(n-s-t)-\sum^q_{i=1}|C_i|+2\nonumber\\
&\geq& (n-s-1)t-2n+a+(q-1)(n-s-t)+2.
\end{eqnarray}
Combining $(\ref{eq15}),$ $n-s-t\geq q\geq 3$ and $a\geq 2,$ we have
\begin{eqnarray}\label{eq16}
\theta_G(S,T) &=& as-at+\sum_{x\in T} d_{G-S}(x)-o_G(S,T)\nonumber\\
&\geq& as-at+(n-s-1)t-2n+a+(q-1)(n-s-t)-q+2\nonumber\\
&=& q(n-s-t-1)+a(s-t+1)+tn-3n-st+s+2\nonumber\\
&\geq& 3(n-s-t-1)+a(s-t+1)+tn-3n-st+s+2\nonumber\\
&=& a(s-t+1)+t(n-s-3)-2s-1.
\end{eqnarray}
Suppose that $s\leq \frac{n-1}{2}.$ Combining (\ref{eq16}), $n\geq 3a+4,$ $t\geq 2$ and $a\geq 2,$ we have
\begin{eqnarray*}
\theta_G(S,T) &\geq& t(n-s-a-3)+s(a-2)+a-1\\
&\geq& t\Big(\frac{n+1}{2}-a-3\Big)+s(a-2)+a-1\\
&\geq& \frac{t(a-1)}{2}+s(a-2)+a-1\\
&\geq& 2(a-1)+s(a-2)\\
&\geq& 2\geq \varepsilon(S,T),
\end{eqnarray*}
which contradicts (\ref{eq14}).
Next we assume that $s\geq \frac{n}{2}.$ Then $s\geq t+q.$ Combining $a\geq 2,$ $o_G(S,T)\leq q$ and $q\geq 3,$ we have
\begin{eqnarray*}
\theta_G(S,T) &=& as-at+\sum_{x\in T} d_{G-S}(x)-o_G(S,T)\\
&\geq& aq-o_G(S,T)\\
&\geq& q\geq 3> \varepsilon(S,T),
\end{eqnarray*}
contradicting (\ref{eq14}).
\end{proof}

\begin{claim}\label{cla7}
$s\geq t+2.$
\end{claim}
\begin{proof}
Suppose to the contrary that $s\leq t+1.$ Since $e(\overline{G})\leq n-\lceil\frac{a}{2}\rceil-1,$ $2e(\overline{G}[T])+e_{\overline{G}}(T,V(T')) \leq t(t-1)+\Big(n-\lceil\frac{a}{2}\rceil-1-\frac{t(t-1)}{2}\Big).$
Then
\begin{eqnarray}\label{eq17}
\sum_{x\in T} d_{G-S}(x)&\geq& t(n-s-1)-t(t-1)-\Big(n-\Big\lceil\frac{a}{2}\Big\rceil-\frac{t(t-1)}{2}-1\Big)\nonumber\\
&=& n(t - 1) - ts - \frac{t^2}{2} - \frac{t}{2} + \Big\lceil\frac{a}{2}\Big\rceil+1.
\end{eqnarray}
Combining (\ref{eq17}), $q\leq 2$ and $n\geq 3a+4,$ we have
\begin{eqnarray*}
\theta_G(S,T) &=& as-at+\sum_{x\in T} d_{G-S}(x)-o_G(S,T)\\
&\geq& as-at+n(t-1)-ts-\frac{t^2}{2}-\frac{t}{2}+\Big\lceil\frac{a}{2}\Big\rceil-q+1\\
&\geq& as-at+(3a+4)(t-1)-ts-\frac{t^2}{2}-\frac{t}{2}+\Big\lceil\frac{a}{2}\Big\rceil-1\\
&\geq& -\frac{t^2}{2}+\Big(2a-s+\frac{7}{2}\Big)t+as-\frac{5a}{2}-5.
\end{eqnarray*}
Let $f(t)=-\frac{t^2}{2}+\Big(2a-s+\frac{7}{2}\Big)t+as-\frac{5a}{2}-5.$ Recall that $G$ is not $[a,b]$-covered, $a=b\geq 2$ and $s\leq t+1.$
Combining Lemma \ref{lem3.1} and $t\geq 2,$ we have $2\leq t \leq 4.$
Hence $f(t) \geq \min\{f(2),f(4)\}.$
By $a\geq 2$ and $s\leq t+1\leq 5,$ we have $f(2)=(a-2)s+\frac{3a}{2}\geq 3$ and $f(4)=a(s+\frac{11}{2})-4s+1\geq 2.$
Hence $f(t)\geq \min\{f(2),f(4)\}\geq 2$ on $2\leq t\leq 4.$ Then $\theta_G(S,T)\geq 2\geq\varepsilon(S,T),$ contrary to (\ref{eq14}).
\end{proof}
Combining $a\geq 2,$ $o_G(S,T)\leq q,$ Claims \ref{cla7} and \ref{cla6}, we have
\begin{eqnarray*}
\theta_G(S,T) &=& as-at+\sum_{x\in T} d_{G-S}(x)-o_G(S,T) \\
&\geq& 4-q \geq 2 \geq \varepsilon(S,T),
\end{eqnarray*}
which contradicts (\ref{eq14}).

(ii) Now we are in the position to consider $b\neq 1,$ $a=1$ and $e(\overline{G})\leq n-1.$ Suppose to the contrary that $G$ is not $[a,b]$-covered. Then we have
\begin{eqnarray}\label{eq18}
\theta_G(S,T) = bs-t+\sum_{x\in T} d_{G-S}(x) < \varepsilon(S,T),
\end{eqnarray}
where
\begin{align*}
\begin{split}
\varepsilon(S,T)= \left \{
\begin{array}{ll}
    2,                    & S \mbox{ is not independent},\\
    1,                    & \mbox{$S$ is independent and there is a component $C$ of $G-S-T$ such}\\
                          & \mbox{that there exists an edge joining $C$ and $S.$}\\
    0,                    & \mbox{otherwise}.
\end{array}
\right.
\end{split}
\end{align*}
We claim that $s\leq t.$ Otherwise, $\theta_G(S,T) = bs-t+\sum_{x\in T} d_{G-S}(x) \geq b\geq 2\geq \varepsilon(S,T),$ which contradicts (\ref{eq18}). Note that $\delta(G)\geq 1.$
If $t\leq 1$ and $s=0,$ then $\varepsilon(S,T)=0$ and $\sum_{x\in T} d_{G-S}(x)\geq t.$ Hence $\theta_G(S,T)=bs-t+\sum_{x\in T} d_{G-S}(x)\geq 0=\varepsilon(S,T),$ contradicting (\ref{eq18}).
If $t=1$ and $s=1,$ then $\varepsilon(S,T)\leq 1.$ Thus $\theta_G(S,T)=bs-t+\sum_{x\in T} d_{G-S}(x)\geq b-1\geq 1\geq \varepsilon(S,T),$ contrary to (\ref{eq18}).
If $t=2,$ then we suppose that $u_1,u_2\in V(T).$ Combining $e(\overline{G})\leq n-1$ and $n\geq 7,$ we have
\begin{eqnarray*}
bs+\sum_{x\in T} d_{G-S}(x)&\geq& d_{G}(u_1)+d_{G}(u_2)\\
&\geq& 2(n-1)-2e(\overline{G}[T])-e_{\overline{G}}(T,V(T'))\\
&\geq& 2(n-1)-2-(n-2)=n-2\geq 5,
\end{eqnarray*}
which implies that $\theta_G(S,T)=bs-t+\sum_{x\in T} d_{G-S}(x)\geq 3>\varepsilon(S,T),$ which contradicts (\ref{eq18}).
If $t\geq 3,$ then we assume that $u_1,u_2,u_3\in V(T).$ By $e(\overline{G})\leq n-1$ and $n\geq 7,$ we have
\begin{eqnarray*}
(b+1)s+\sum_{x\in T} d_{G-S}(x)&\geq& d_{G}(u_1)+d_{G}(u_2)+d_{G}(u_3)\\
&\geq& 3(n-1)-6-(n-4)\\
&=& 2n-5\geq n+2.
\end{eqnarray*}
Then $\theta_G(S,T)=bs-t+\sum_{x\in T} d_{G-S}(x)\geq n-s-t+2\geq 2\geq \varepsilon(S,T),$ contradicting (\ref{eq18}).
\end{proof}

\vspace{3mm}

\noindent  \textbf{Proof of Theorem \ref{main0}.}
(i) $a \leq b,$ $b \geq 2$ and $n \geq 3a+4.$
Note that $\delta(H_{n,a})=a-1.$ Then $H_{n,a}$ has no $[a,b]$-factor. Hence $H_{n,a}$ is not $[a, b]$-covered. Suppose to the contrary that $G$ is not $[a,b]$-covered and $G\ncong H_{n,a}.$
We claim that $\delta(G)\geq a.$ Suppose to the contrary that $\delta (G) \leq a-1.$ Then $G$ is a subgraph of $H_{n,a},$ which implies that $G \cong H_{n,a}$ or $e(G) < e(H_{n,a}),$ a contradiction.
Note that $e(G)\geq e(H_{n,a})$ and $n\geq 3a+4.$ Hence $e(\overline{G})\leq n-a.$
If $a=1,$ then $e(\overline{G})\leq n-1.$
If $a\geq 2,$ then $e(\overline{G})\leq n-a\leq n-\lceil\frac{a}{2}\rceil-1.$
By $\delta(G)\geq a$ and Lemma \ref{lem3.3}, $G$ is $[a,b]$-covered, a contradiction.

\vspace{2mm}

(ii) $a=b=1$ and $n \geq 4.$
Note that $H_{n,3}=K_2\vee(K_1\cup K_{n-3})$ and $na=n \equiv 0\pmod{2}.$
Let $S=V(K_2),$ $T=V(K_1)$ and $T'=V(K_{n-3}).$
Hence $e(T,T')=0$ and $\sum_{x\in T} d_{G-S}(x)+t' = n-3 \equiv 1\pmod{2}.$
Then $T'$ is an odd component. Therefore, $o_G(S,T)=1.$
Since $S$ is not independent, $\varepsilon(S,T)=2.$
Then $\theta_G(S,T) = s-t-1=0<\varepsilon(S,T).$
By Theorem \ref{th1.4}, $H_{n,3}$ is not matching-covered.
It is easy to verify that $K_3\vee 3K_1$ is not matching-covered.
Suppose that $G$ is not matching-covered. Then there exists an edge $uv$ such that $G'=G-\{u,v\}$ contains no perfect matching.
By Theorem \ref{th1.2}, $e(G')\leq \binom{n-3}{2}$ with equality if and only if $G'\cong K_{n-3} \cup K_1$ or $K_{1,3}.$ We claim that $G'\cong K_{n-3} \cup K_1$ or $K_{1,3}.$ Otherwise, we have
$e(G')<\binom{n-3}{2}.$ Then $e(G)< \binom{n-3}{2}+2(n-2)+1=\binom{n-1}{2}+2,$ which contradicts $e(G)\geq \binom{n-1}{2}+2$. Hence $G$ is a subgraph of $H_{n,3}$ or $3K_1\vee K_3.$
By $e(G)\geq \binom{n-1}{2}+2,$ $G\cong H_{n,3}$ or $3K_1\vee K_3.$
\hspace*{\fill}$\Box$

\vspace{3mm}

\noindent  \textbf{Proof of Theorem \ref{main1}.}
(i) $a \leq b,$ $b \geq 2$ and $n \geq 3a+4.$ Recall that $H_{n,a}$ is not $[a, b]$-covered. Suppose to the contrary that $G$ is not $[a,b]$-covered and $G\ncong H_{n,a}.$
Note that $K_{n-1}$ is a subgraph of $H_{n,a}.$ Then we have
\begin{eqnarray}\label{eq19}
\rho(G) \geq \rho(H_{n,a}) \geq \rho(K_{n-1}) = n-2.
\end{eqnarray}
We claim that $\delta(G)\geq a.$ Suppose to the contrary that $\delta (G) \leq a-1.$ Then $G$ is a subgraph of $H_{n,a},$ which implies that $G \cong H_{n,a}$ or $\rho(G) < \rho(H_{n,a}),$ a contradiction.
By $\delta(G)\geq a,$ (\ref{eq19}) and Lemma \ref{lem2.2}, $e(\overline{G})\leq n-\lceil\frac{a}{2}\rceil-1.$
Combining $\delta(G)\geq a,$ $n\geq 3a+4,$ $b\neq 1$ and Lemma \ref{lem3.3} (i), we have $G$ is $[a,b]$-covered, a contradiction.

\vspace{2mm}

(ii) $a=b=1$ and $n \geq 4.$
Recall that $H_{n,3}$ is not matching-covered.
Suppose to the contrary that $G$ is not matching-covered and $G\ncong H_{n,3}.$ Note that $K_{n-1}$ is a subgraph of $H_{n,3}.$
Then $\rho(G) \geq \rho(H_{n,3}) \geq \rho(K_{n-1}) = n-2.$
We claim that $\delta(G)\geq 3.$ Suppose to the contrary that $\delta (G) \leq 2.$ Then $G$ is a subgraph of $H_{n,3},$ which implies that $G \cong H_{n,3}$ or $\rho(G) < \rho(H_{n,3}),$ a contradiction.
Combining $\delta(G)\geq 3,$ $\rho(G) \geq n-2$ and Lemma \ref{lem2.2}, we have $e(\overline{G})\leq n-3.$
By Theorem \ref{main0} and $G\ncong H_{n,3},$ we have $G\cong K_3\vee 3K_1.$
For $n=6,$ by equitable quotient matrices, one can obtain that $\rho(H_{6,3})> \rho(K_3\vee 3K_1),$ which contradicts $\rho(G) \geq \rho(H_{n,3}).$
\hspace*{\fill}$\Box$

\section{Concluding remarks}

In this paper, we provide complete characterization of the extremal graphs that maximize the size or the spectral radius within the family of non-$[a,b]$-covered graphs. Note that the extremal graphs in our main theorems contain large cliques. A natural and interesting direction for future research is to consider the extremal problems restricted to bipartite graphs. Since bipartite graphs contain no odd cycles, the clique-based extremal structures are forbidden. Consequently, the extremal bipartite graphs that are not $[a,b]$-covered will be fundamentally different. Motivated by this, we propose the following problems.

\begin{prob}
Determine the maximum size $e(G)$ of an $n$-vertex bipartite graph $G$ that is not $[a,b]$-covered, and characterize the corresponding size-extremal graphs.
\end{prob}

\begin{prob}
Determine the maximum spectral radius $\rho(G)$ of an $n$-vertex bipartite graph $G$ that is not $[a,b]$-covered, and characterize the spectral extremal graphs.
\end{prob}

\vspace{5mm}
\noindent
{\bf Declaration of competing interest}
\vspace{3mm}

The authors declare that they have no known competing financial interests or personal relationships that could have appeared to influence the work reported in this paper.

\vspace{5mm}
\noindent
{\bf Data availability}
\vspace{3mm}

No data was used for the research described in this paper.



\end{document}